\newtheorem{theorem}{Theorem}[section]
\newtheorem{conjecture}[theorem]{Conjecture}
\newtheorem{corollary}[theorem] {Corollary}
\newtheorem{definition}[theorem]{Definition}
\newtheorem{example}[theorem]{Example}
\newtheorem{lemma} [theorem]{Lemma}
\newtheorem{proposition}[theorem]{Proposition}
\newtheorem{question}[theorem]{Question}
\title{This is the title}
\begin{document}

	\begin{center}
		{\bf{CONTINUOUS WELCH BOUNDS WITH APPLICATIONS}}\\
		K. MAHESH KRISHNA\\
		Department of Humanities and Basic Sciences\\
		Aditya College of Engineering and Technology\\
		 Surampalem, East-Godavari\\
		Andhra Pradesh 533 437 India\\
		Email: kmaheshak@gmail.com \\
		\today
	\end{center}
	
	\hrule
	\vspace{0.5cm}

\textbf{Abstract}:  Let $(\Omega, \mu)$ be a  measure space and $\{\tau_\alpha\}_{\alpha\in \Omega}$ be a 	normalized continuous Bessel  family for a finite dimensional Hilbert  space $\mathcal{H}$ of dimension $d$. If the diagonal $\Delta\coloneqq \{(\alpha, \alpha):\alpha \in \Omega\}$ is measurable in the measure space $\Omega\times \Omega$, then we show that 
	\begin{align*}
		\sup _{\alpha, \beta \in \Omega, \alpha\neq \beta}|\langle \tau_\alpha, \tau_\beta\rangle |^{2m}\geq \frac{1}{(\mu\times\mu)((\Omega\times\Omega)\setminus\Delta)}\left[\frac{	\mu(\Omega)^2}{{d+m-1 \choose m}}-(\mu\times\mu)(\Delta)\right],  \quad \forall m \in \mathbb{N}.
\end{align*}
This improves 47 years old celebrated result of Welch [\textit{IEEE Transactions on  Information Theory, 1974}].  We introduce the notions of continuous cross correlation and frame potential of Bessel family and give applications of continuous Welch bounds to these concepts. We also introduce the notion of continuous Grassmannian frames. 

\textbf{Keywords}: Welch bound, continuous Bessel family, Grassmannian frames, Zauner's conjecture.

\textbf{Mathematics Subject Classification (2020)}: 42C15.
\section{Introduction}
In 1974, L. Welch proved the following milestone result which revolutioned the study of finite set of vectors in finite dimensional  Hilbert spaces.
\begin{theorem}\cite{WELCH}\label{WELCHTHEOREM} (\textbf{Welch bounds})
Let $n\geq d$.	If	$\{\tau_j\}_{j=1}^n$  is any collection of  unit vectors in $\mathbb{C}^d$, then
	\begin{align*}
		\sum_{j=1}^n\sum_{k=1}^n|\langle \tau_j, \tau_k\rangle |^{2m}\geq \frac{n^2}{{d+m-1\choose m}}, \quad \forall m \in \mathbb{N}.
	\end{align*}
	In particular,
	\begin{align*}
		\sum_{j=1}^n\sum_{k=1}^n|\langle \tau_j, \tau_k\rangle |^{2}\geq \frac{n^2}{{d}}.
	\end{align*}
Further, 
	\begin{align}\label{FIRST123}
\text{(\textbf{Higher order Welch bounds})}	\quad		\max _{1\leq j,k \leq n, j\neq k}|\langle \tau_j, \tau_k\rangle |^{2m}\geq \frac{1}{n-1}\left[\frac{n}{{d+m-1\choose m}}-1\right], \quad \forall m \in \mathbb{N}.
	\end{align}
	In particular,
	\begin{align*}
	\text{(\textbf{First order Welch bound})}\quad 	\max _{1\leq j,k \leq n, j\neq k}|\langle \tau_j, \tau_k\rangle |^{2}\geq\frac{n-d}{d(n-1)}.
	\end{align*}
\end{theorem}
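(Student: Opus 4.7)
The plan is to reduce the higher-order bound to the $m=1$ bound applied in the $m$-fold symmetric tensor power of $\mathbb{C}^d$. The key identity is that if $\tau_j$ is a unit vector, then the rank-one tensor $\tau_j^{\otimes m}$ is a unit vector in $\mathrm{Sym}^m(\mathbb{C}^d)$, and $\langle \tau_j^{\otimes m}, \tau_k^{\otimes m}\rangle = \langle \tau_j, \tau_k\rangle^m$, so $|\langle \tau_j^{\otimes m}, \tau_k^{\otimes m}\rangle|^2 = |\langle \tau_j, \tau_k\rangle|^{2m}$. Since $\dim \mathrm{Sym}^m(\mathbb{C}^d) = \binom{d+m-1}{m}$, all the higher-order inequalities will follow from the $m=1$ case once we are working in this ambient space.

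First I would establish the basic sum bound. Let $D = \binom{d+m-1}{m}$ and consider the positive semidefinite operator $S \coloneqq \sum_{j=1}^{n} \tau_j^{\otimes m} (\tau_j^{\otimes m})^{*}$ acting on $\mathrm{Sym}^m(\mathbb{C}^d)$. Its trace equals $\sum_{j=1}^{n} \|\tau_j^{\otimes m}\|^2 = n$. For its Hilbert--Schmidt norm squared,
\begin{align*}
\operatorname{tr}(S^2) \;=\; \sum_{j=1}^{n}\sum_{k=1}^{n}|\langle \tau_j^{\otimes m}, \tau_k^{\otimes m}\rangle|^2 \;=\; \sum_{j=1}^{n}\sum_{k=1}^{n}|\langle \tau_j, \tau_k\rangle|^{2m}.
\end{align*}
The Cauchy--Schwarz inequality on the $D$ eigenvalues of $S$ (equivalently, $\operatorname{tr}(S)^2 \le D\cdot\operatorname{tr}(S^2)$) then gives
\begin{align*}
\sum_{j=1}^{n}\sum_{k=1}^{n}|\langle \tau_j, \tau_k\rangle|^{2m} \;\geq\; \frac{n^2}{\binom{d+m-1}{m}},
\end{align*}
which is the first asserted bound, and the specialization to $m=1$ yields the $n^2/d$ form.

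Next I would extract the max-type (higher-order Welch) inequality from the sum bound by peeling off the diagonal. Since each $\tau_j$ is a unit vector, the $n$ diagonal terms contribute exactly $n$. The remaining $n(n-1)$ off-diagonal terms are each bounded above by $M \coloneqq \max_{j\neq k}|\langle \tau_j,\tau_k\rangle|^{2m}$, so
\begin{align*}
n + n(n-1)\,M \;\geq\; \sum_{j=1}^{n}\sum_{k=1}^{n}|\langle \tau_j, \tau_k\rangle|^{2m} \;\geq\; \frac{n^2}{\binom{d+m-1}{m}}.
\end{align*}
Solving for $M$ produces the stated $\frac{1}{n-1}\bigl[\tfrac{n}{\binom{d+m-1}{m}}-1\bigr]$, and the first-order version drops out at $m=1$ by simplifying $\frac{1}{n-1}(\frac{n}{d}-1) = \frac{n-d}{d(n-1)}$.

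The only genuinely nontrivial ingredient is the symmetric tensor power and its dimension count; the rest is the trace/Cauchy--Schwarz step plus bookkeeping. I expect the main technical point when generalizing to the continuous setting (Theorem of the paper) to be the analogue of this diagonal-peeling argument, since in the measure-theoretic setting the diagonal $\Delta$ need no longer have zero measure; it is precisely this replacement of the ``$n$ diagonal terms'' by $(\mu\times\mu)(\Delta)$, and of ``$n(n-1)$ off-diagonal terms'' by $(\mu\times\mu)((\Omega\times\Omega)\setminus\Delta)$, that produces the form of the continuous bound stated in the abstract.
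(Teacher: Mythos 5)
Your proposal is correct and follows essentially the same route as the paper: the paper obtains this theorem as the counting-measure specialization of its continuous Welch bound (Theorem \ref{CONTINUOUSWELCHMAIN}), whose proof is exactly your argument --- pass to $\mathrm{Sym}^m(\mathcal{H})$ with $\dim = \binom{d+m-1}{m}$, apply $\operatorname{Tra}(S_\tau)^2 \le \dim \cdot \operatorname{Tra}(S_\tau^2)$ via Cauchy--Schwarz on the eigenvalues, and then peel off the diagonal (which in the continuous setting becomes $(\mu\times\mu)(\Delta)$, just as you anticipate).
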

A very powerful application of Welch bounds is the lower bound on root-mean-square (RMS) absolute cross relation of unit vectors  $\{\tau_j\}_{j=1}^n$ which is defined as 

\begin{align*}
	I_{\text{RMS}} (\{\tau_j\}_{j=1}^n)\coloneqq \left(\frac{1}{n(n-1)}\sum _{1\leq j,k \leq n, j\neq k}|\langle \tau_j, \tau_k\rangle |^{2}\right)^\frac{1}{2}.
\end{align*}
Theorem \ref{WELCHTHEOREM} says that 
\begin{align*}
	I_{\text{RMS}} (\{\tau_j\}_{j=1}^n) \geq \left(\frac{n-d}{d(n-1)}\right)^\frac{1}{2}.
\end{align*}
Another powerful application of Theorem \ref{WELCHTHEOREM} is the lower bound for frame potential which is introduced by Benedetto and Fickus \cite{BENEDETTOFICKUS} and further studied  in  \cite{CASAZZAFICKUSOTHERS, BODMANNHAASPOTENTIAL}. Let us recall that given a collection of unit vectors $\{\tau_j\}_{j=1}^n$, the frame potential is defined as 
	\begin{align*}
FP(\{\tau_j\}_{j=1}^n)\coloneqq	\sum_{j=1}^n\sum_{k=1}^n|\langle \tau_j, \tau_k\rangle |^{2}.
\end{align*}
Theorem  \ref{WELCHTHEOREM}  directly tells 
\begin{align*}
	FP(\{\tau_j\}_{j=1}^n)\geq \frac{n^2}{{d}}.
\end{align*}
There are several practical applications of Theorem \ref{WELCHTHEOREM} such as correlations \cite{SARWATE},  codebooks \cite{DINGFENG}, numerical search algorithms  \cite{XIA, XIACORRECTION}, quantum measurements 
\cite{SCOTTTIGHT}, coding and communications \cite{TROPPDHILLON, STROHMERHEATH}, code division multiple access (CDMA) systems \cite{CHEBIRA1, CHEBIRA2}, wireless systems \cite{YATES}, compressed sensing \cite{TAN}, `game of Sloanes' \cite{JASPERKINGMIXON}, equiangular tight frames \cite{SUSTIKTROPP},  etc.
 \\
A decade ago, a continuous version of Theorem \ref{WELCHTHEOREM} appeared in the paper \cite{DATTAHOWARD} which states as follows. 
\begin{theorem}\cite{DATTAHOWARD}\label{DATTATHEOREM}
Let $\mathbb{C}	\mathbb{P}^{n-1}$ be the complex projective space and $\mu$ be a normalized measure on $\mathbb{C}	\mathbb{P}^{n-1}$.  If $\{\tau_\alpha\}_{\alpha \in \mathbb{C}	\mathbb{P}^{n-1}}$ is a continuous frame for a $d$-dimensional subspace $\mathcal{H}$  of a Hilbert space $\mathcal{H}_0$, then 
\begin{align*}
	\int_{\mathbb{C}	\mathbb{P}^{n-1}}\int_{\mathbb{C}	\mathbb{P}^{n-1}}|\langle \tau_\alpha, \tau_\beta\rangle|^{2m}\, d \mu(\alpha)\, d \mu(\beta)\geq \frac{1}{{d+m-1\choose m}}, \quad \forall m \in \mathbb{N}.
\end{align*}
\end{theorem}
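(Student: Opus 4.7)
The natural approach is the symmetric-tensor-power lift that underlies the classical proof of Theorem \ref{WELCHTHEOREM}. The plan is to pass from $\mathcal{H}$ to its $m$-fold symmetric tensor power $\operatorname{Sym}^{m}(\mathcal{H})$, a Hilbert space of dimension $D \coloneqq \binom{d+m-1}{m}$, and to lift the family to $\sigma_\alpha \coloneqq \tau_\alpha^{\otimes m}$. The algebraic identity $\langle \sigma_\alpha, \sigma_\beta\rangle = \langle \tau_\alpha, \tau_\beta\rangle^{m}$ converts the $2m$-th moment bound for $\{\tau_\alpha\}$ in $\mathcal{H}$ into a first-order ($m=1$) Welch-type trace inequality for $\{\sigma_\alpha\}$ in the $D$-dimensional space $\operatorname{Sym}^{m}(\mathcal{H})$, where the combinatorial factor $\binom{d+m-1}{m}$ enters simply as a dimension count.

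To execute this, I would introduce the positive self-adjoint ``$m$-th moment operator''
$$S \coloneqq \int_{\mathbb{C}\mathbb{P}^{n-1}} \sigma_\alpha \otimes \sigma_\alpha^{*}\, d\mu(\alpha)$$
on $\operatorname{Sym}^{m}(\mathcal{H})$, defined weakly by $\langle S x, y\rangle = \int \langle x, \sigma_\alpha\rangle \langle \sigma_\alpha, y\rangle\, d\mu(\alpha)$. Two trace computations, both justified by Fubini, do all the work. First,
$$\operatorname{tr}(S) = \int_{\mathbb{C}\mathbb{P}^{n-1}} \|\sigma_\alpha\|^{2}\, d\mu(\alpha) = \int_{\mathbb{C}\mathbb{P}^{n-1}} \|\tau_\alpha\|^{2m}\, d\mu(\alpha),$$
which under the standing normalizations ($\|\tau_\alpha\|=1$, $\mu$ a probability measure) equals $1$. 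Second,
$$\operatorname{tr}(S^{2}) = \int\!\!\int |\langle \sigma_\alpha, \sigma_\beta\rangle|^{2}\, d\mu(\alpha)\, d\mu(\beta) = \int\!\!\int |\langle \tau_\alpha, \tau_\beta\rangle|^{2m}\, d\mu(\alpha)\, d\mu(\beta).$$
The theorem then follows at once from the Cauchy--Schwarz inequality applied to the eigenvalues of the positive operator $S$ on a $D$-dimensional space, namely $(\operatorname{tr} S)^{2} \leq D\cdot \operatorname{tr}(S^{2})$, which rearranges to precisely the claimed lower bound $1/\binom{d+m-1}{m}$.

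The one step that demands genuine care, and is the main technical obstacle, is the operator-valued integral defining $S$: one needs $\alpha \mapsto \sigma_\alpha \otimes \sigma_\alpha^{*}$ to be weakly (indeed Bochner) integrable, so that $S$ is a well-defined trace-class operator and the two identities above are licit. Because $\operatorname{Sym}^{m}(\mathcal{H})$ is finite-dimensional, this reduces to scalar measurability and integrability of the matrix entries $\alpha \mapsto \langle \sigma_\alpha, e_i\rangle \overline{\langle \sigma_\alpha, e_j\rangle}$ in a fixed orthonormal basis, both of which follow from the continuous-frame hypothesis on $\{\tau_\alpha\}$ (weak measurability of $\alpha\mapsto\tau_\alpha$ plus the Bessel bound, raised tensorially to the $m$-th power). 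Once this soft-analytic foundation is laid, the rest of the proof is the pure linear-algebra computation outlined above, and it also sets the stage for the sharper $\sup$-bound of the abstract by splitting the double integral over $\Delta$ and $(\Omega\times\Omega)\setminus\Delta$.
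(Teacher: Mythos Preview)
Your proposal is correct and follows essentially the same route as the paper: the paper derives Theorem~\ref{DATTATHEOREM} as a specialization of Theorem~\ref{CONTINUOUSWELCHMAIN}, whose proof is precisely the symmetric-tensor lift $\tau_\alpha\mapsto\tau_\alpha^{\otimes m}\in\operatorname{Sym}^m(\mathcal{H})$ followed by the trace inequality $(\operatorname{Tra} S_\tau)^2\le\dim(\operatorname{Sym}^m(\mathcal{H}))\cdot\operatorname{Tra}(S_\tau^2)$ that you describe. The paper packages the two trace identities you use as Theorem~\ref{TRACEFORMULA} and the dimension count as Theorem~\ref{SYMMETRICTENSORDIMENSION}, but the argument is otherwise identical.
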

Drawback of Theorem \ref{DATTATHEOREM} is that it works only for the measures defined on complex projective spaces.  Further, we need a generalization of Inequality (\ref{FIRST123})  for measure spaces.
Therefore it is desirable to improve Theorem \ref{DATTATHEOREM}.and to get a continuous version of Inequality (\ref{FIRST123}) by replacing maximum by supremum. For the sake of completeness, we note that there are some further refinements of Theorem \ref{WELCHTHEOREM}, see \cite{CHRISTENSENDATTAKIM, DATTAWELCHLMA, WALDRONSH}.\\
The goal  of this article is to derive Theorem \ref{WELCHTHEOREM}  for arbitrary measure spaces (Theorem  \ref{CONTINUOUSWELCHMAIN}). We  give some applications of Theorem  \ref{CONTINUOUSWELCHMAIN}. We also ask some  problems for further research.

\section{Continuous Welch bounds}
Our proof of the result stated in the abstract is using the theory of continuous frames. This is generalization of frames indexed by discrete sets to measurable sets. Continuous frames are introduced independently by Ali, Antoine and Gazeau \cite{ALIANTOINEGAZEAU} and Kaiser \cite{KAISER}. In the paper,   $\mathbb{K}$ denotes $\mathbb{C}$ or $\mathbb{R}$ and $\mathcal{H}$ denotes a finite dimensional Hilbert space.
\begin{definition}\cite{ALIANTOINEGAZEAU, KAISER}
Let 	$(\Omega, \mu)$ be a measure space. A collection   $\{\tau_\alpha\}_{\alpha\in \Omega}$ in 	a  Hilbert  space $\mathcal{H}$ is said to be a \textbf{continuous frame} (or generalized frame) for $\mathcal{H}$ if the following holds.
\begin{enumerate}[\upshape(i)]
	\item For each $h \in \mathcal{H}$, the map $\Omega \ni \alpha \mapsto \langle h, \tau_\alpha \rangle \in \mathbb{K}$ is measurable.
	\item There are $a,b>0$ such that 
	\begin{align*}
		a\|h\|^2\leq \int_{\Omega}|\langle h, \tau_\alpha \rangle|^2\,d\mu(\alpha)\leq b \|h\|^2, \quad \forall h \in \mathcal{H}.
	\end{align*}
\end{enumerate}
If $a=b$, then the frame is called as a tight frame and if $\|\tau_\alpha\|=1$, $\forall \alpha \in \Omega$, then we say that the frame is normalized. If $a=b=1$, then the frame is called as a Parseval frame. If we do not demand the first inequality in (ii), then we say it is a \textbf{continuous Bessel family} for $\mathcal{H}$. 
\end{definition}
We first observe that there is an abundance of continuous frames for finite dimensional Hilbert spaces. Further, it is known that given any finite mesure space $(\Omega, \mu)$ and a finite dimensional space $\mathcal{H}$, there exists a continuous frame $\{\tau_\alpha\}_{\alpha\in \Omega}$ for $\mathcal{H}$ \cite{RAHIMIDARABYDARVISHI}. Given a continuous Bessel family, the analysis operator
\begin{align*}
	\theta_\tau:\mathcal{H} \ni h \mapsto \theta_\tau h \in \mathcal{L}^2(\Omega);  \quad \theta_\tau h:\Omega \ni \alpha \mapsto  \langle h, \tau_\alpha \rangle \in \mathbb{K}
\end{align*}
is a well-defined bounded linear operator. Its adjoint, the synthesis operator is given by 
\begin{align*}
	\theta_\tau^*:\mathcal{L}^2(\Omega)\ni f \mapsto \int_{\Omega}f (\alpha)\tau_\alpha \,d\mu(\alpha)\in \mathcal{H}.
\end{align*}
By combining analysis and synthesis operators, we get the frame operator, defined as 
\begin{align*}
	S_\tau\coloneqq 	\theta_\tau^*	\theta_\tau:\mathcal{H} \ni h \mapsto\int_{\Omega}\langle h, \tau_\alpha \rangle \tau_\alpha \,d\mu(\alpha)\in \mathcal{H}.
\end{align*}

Note that the integrals are weak integrals (Pettis integrals \cite{TALAGRAND}). Following result captures the trace of frame operator using Bessel family.
\begin{theorem}\label{TRACEFORMULA}
Let $\{\tau_\alpha\}_{\alpha\in \Omega}$ be a 	continuous Bessel  family for    $\mathcal{H}$. Then 
	\begin{align*}
	&	\text{Tra}(S_\tau)=\int_\Omega \| \tau_\alpha\|^2\, d \mu(\alpha),\\
		&	\text{Tra}(S_\tau^2)=\int_\Omega\int_\Omega |\langle \tau_\alpha, \tau_\beta\rangle|^2\, d \mu(\alpha)\, d \mu(\beta).
	\end{align*}
\end{theorem}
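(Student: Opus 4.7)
My plan is to reduce everything to the elementary identity $\operatorname{Tra}(T)=\sum_{j=1}^d\langle Te_j,e_j\rangle$ applied with a fixed orthonormal basis $\{e_j\}_{j=1}^d$ of $\mathcal{H}$, and then to exchange the (finite) sum over $j$ with the Pettis integral defining $S_\tau$. Since $\mathcal{H}$ is finite dimensional, $S_\tau$ is of finite rank, so it is automatically trace class and Hilbert--Schmidt, which removes all convergence worries: the only analytic input beyond linearity is a standard Fubini application on $\Omega\times\Omega$, valid because the inner products appearing are measurable and the finite sums in $j,k$ can be pulled outside.

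For the first identity, I would simply compute
\begin{align*}
\operatorname{Tra}(S_\tau)=\sum_{j=1}^d\langle S_\tau e_j,e_j\rangle=\sum_{j=1}^d\int_\Omega\langle e_j,\tau_\alpha\rangle\langle\tau_\alpha,e_j\rangle\,d\mu(\alpha)=\int_\Omega\sum_{j=1}^d|\langle \tau_\alpha,e_j\rangle|^2\,d\mu(\alpha),
\end{align*}
and then collapse the inner sum to $\|\tau_\alpha\|^2$ by Parseval. The only step requiring a word of justification is pulling $\langle\cdot,e_j\rangle$ inside the weak integral defining $S_\tau e_j$, which is exactly the defining property of the Pettis integral.

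For the second identity, I would insert a resolution of the identity twice and use self-adjointness of $S_\tau$:
\begin{align*}
\operatorname{Tra}(S_\tau^2)=\sum_{k=1}^d\langle S_\tau^2 e_k,e_k\rangle=\sum_{j,k=1}^d\langle S_\tau e_k,e_j\rangle\langle S_\tau e_j,e_k\rangle.
\end{align*}
Then I would substitute the integral representation $\langle S_\tau e_k,e_j\rangle=\int_\Omega\langle e_k,\tau_\alpha\rangle\langle\tau_\alpha,e_j\rangle\,d\mu(\alpha)$ for both factors (in variables $\alpha$ and $\beta$), apply Fubini on $\Omega\times\Omega$, and recollect the finite $j$-sum and $k$-sum via Parseval:
\begin{align*}
\sum_{j=1}^d\langle\tau_\alpha,e_j\rangle\langle e_j,\tau_\beta\rangle=\langle\tau_\alpha,\tau_\beta\rangle,\qquad \sum_{k=1}^d\langle\tau_\beta,e_k\rangle\langle e_k,\tau_\alpha\rangle=\langle\tau_\beta,\tau_\alpha\rangle.
\end{align*}
The product is $|\langle\tau_\alpha,\tau_\beta\rangle|^2$, which gives the claimed formula.

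There is no real obstacle; the only delicate points are conceptual rather than computational, namely making sure one is entitled to (i) commute the finite sum $\sum_j\langle\cdot,e_j\rangle$ with the Pettis integral defining $S_\tau$, and (ii) invoke Fubini on the iterated integral over $\Omega\times\Omega$. Both are immediate because all $j$- and $k$-sums are over a finite index set and the integrands are measurable scalar functions. As an alternative packaging one could use cyclicity of the trace to write $\operatorname{Tra}(S_\tau^2)=\operatorname{Tra}((\theta_\tau\theta_\tau^*)^2)$ and recognise $\theta_\tau\theta_\tau^*$ as the Hilbert--Schmidt integral operator on $\mathcal{L}^2(\Omega)$ with kernel $K(\beta,\alpha)=\langle\tau_\alpha,\tau_\beta\rangle$, after which the standard formula $\operatorname{Tra}(T_K^2)=\iint K(\beta,\alpha)K(\alpha,\beta)\,d\mu(\alpha)\,d\mu(\beta)$ finishes the job; but the direct ONB computation above seems cleaner in this finite-dimensional setting.
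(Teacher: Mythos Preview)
Your proof is correct and follows essentially the same route as the paper: fix an orthonormal basis, compute the trace as $\sum_j\langle S_\tau e_j,e_j\rangle$ (resp.\ $\sum_j\langle S_\tau^2 e_j,e_j\rangle$), exchange the finite sum with the Pettis integral, and collapse via Parseval. The only cosmetic difference is that for $\operatorname{Tra}(S_\tau^2)$ the paper expands one $S_\tau$ at a time---first obtaining $\int_\Omega\langle S_\tau\tau_\alpha,\tau_\alpha\rangle\,d\mu(\alpha)$ and then expanding the remaining $S_\tau$---whereas you insert a resolution of the identity to produce the matrix entries $\sum_{j,k}\langle S_\tau e_k,e_j\rangle\langle S_\tau e_j,e_k\rangle$ and substitute both integral representations at once; the two computations are equivalent.
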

\begin{proof}
	Let $\{\omega_j\}_{j=1}^d$ be an orthonormal basis for $\mathcal{H}$, where $d$ is the dimension of $\mathcal{H}$. Then
	\begin{align*}
		\text{Tra}(S_\tau)&=\sum_{j=1}^d\langle S_\tau\omega_j, \omega_j \rangle=\sum_{j=1}^d\left \langle \int_\Omega \langle \omega_j, \tau_\alpha \rangle \tau_\alpha\, d \mu(\alpha), \omega_j\right  \rangle \\
		&=\sum_{j=1}^d\int_\Omega \langle \omega_j, \tau_\alpha \rangle\langle \tau_\alpha,\omega_j \rangle\, d \mu(\alpha)=\int_\Omega\left\langle \sum_{j=1}^d\langle \tau_\alpha,\omega_j \rangle\omega_j, \tau_\alpha \right\rangle \, d \mu(\alpha)\\
		&=\int_\Omega \| \tau_\alpha\|^2\, d \mu(\alpha).
	\end{align*}
Further, 

\begin{align*}
	\text{Tra}(S_\tau^2)&=\sum_{j=1}^d\langle S_\tau^2\omega_j, \omega_j \rangle=\sum_{j=1}^d\langle S_\tau\omega_j,  S_\tau\omega_j \rangle=\sum_{j=1}^d\left \langle \int_\Omega \langle \omega_j, \tau_\alpha \rangle \tau_\alpha\, d \mu(\alpha), S_\tau\omega_j\right  \rangle \\
	&=\sum_{j=1}^d\int_\Omega \langle \omega_j, \tau_\alpha \rangle\langle \tau_\alpha,S_\tau\omega_j \rangle\, d \mu(\alpha)=\int_\Omega\left\langle \sum_{j=1}^d\langle \tau_\alpha,S_\tau\omega_j \rangle\omega_j, \tau_\alpha \right\rangle \, d \mu(\alpha)\\
	&=\int_\Omega\left\langle \sum_{j=1}^d\langle S_\tau^*\tau_\alpha,\omega_j \rangle\omega_j, \tau_\alpha \right\rangle \, d \mu(\alpha)=\int_\Omega \langle S_\tau^*\tau_\alpha, \tau_\alpha\rangle \, d \mu(\alpha)\\
	&=\int_\Omega \langle S_\tau\tau_\alpha, \tau_\alpha\rangle \, d \mu(\alpha)=\int_\Omega\left\langle \int_\Omega\left\langle \tau_\alpha,\tau_\beta\right\rangle \tau_\beta\, d \mu(\beta),\tau_\alpha\right\rangle \, d \mu(\alpha)\\
	&=\int_\Omega\int_\Omega |\langle \tau_\alpha, \tau_\beta\rangle|^2\, d \mu(\alpha)\, d \mu(\beta).
\end{align*}
\end{proof}
Note that a finite spanning set is a frame for finite dimensional Hilbert space \cite{HANKORNELSONLARSON}. Thus it is not required to assume any condition on set of vectors in the discrete case to derive Theorem \ref{TRACEFORMULA}. However, we need to assume the Besselness for continuous family of vectors to assure the existence of frame operator. With Theorem \ref{TRACEFORMULA} we derive continuous Welch bounds. First we need a lemma.
\begin{lemma}\label{LEMMAFINITE}
If $\{\tau_\alpha\}_{\alpha\in \Omega}$ is a normalized	continuous Bessel  family for    $\mathcal{H}$ with bound $b$, then $\mu(\Omega)\leq b \operatorname{dim}(\mathcal{H})$. In particular, $\mu(\Omega)<\infty$.
\end{lemma}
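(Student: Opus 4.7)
The plan is to identify $\mu(\Omega)$ with the trace of the frame operator $S_\tau$ and then bound that trace using the Bessel constant.

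First I would invoke Theorem \ref{TRACEFORMULA}: since $\{\tau_\alpha\}_{\alpha\in\Omega}$ is a continuous Bessel family, the frame operator $S_\tau$ exists and
\begin{align*}
\text{Tra}(S_\tau)=\int_\Omega \|\tau_\alpha\|^2\, d\mu(\alpha).
\end{align*}
Because the family is normalized, $\|\tau_\alpha\|=1$ for every $\alpha$, so the right-hand side collapses to $\mu(\Omega)$.

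Next I would bound $\text{Tra}(S_\tau)$ above. The operator $S_\tau$ is positive and self-adjoint on the finite-dimensional space $\mathcal{H}$, and the upper Bessel inequality gives
\begin{align*}
\langle S_\tau h,h\rangle=\int_\Omega |\langle h,\tau_\alpha\rangle|^2\, d\mu(\alpha)\leq b\|h\|^2,\quad \forall h\in\mathcal{H}.
\end{align*}
Thus every eigenvalue of $S_\tau$ lies in $[0,b]$, and summing over an orthonormal eigenbasis yields $\text{Tra}(S_\tau)\leq b\,\dim(\mathcal{H})$.

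Combining the two steps gives $\mu(\Omega)\leq b\,\dim(\mathcal{H})$, and since $\mathcal{H}$ is finite-dimensional and $b<\infty$, this forces $\mu(\Omega)<\infty$. There is no real obstacle here; the only subtle point is making sure the trace formula of Theorem \ref{TRACEFORMULA} is legitimately applicable in the normalized case (which it is, since normalization makes the integrand $\|\tau_\alpha\|^2$ constant and hence trivially integrable once we know $S_\tau$ exists), and that the Bessel bound $b$ controls the operator norm of $S_\tau$, whence also its eigenvalues.
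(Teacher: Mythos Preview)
Your proof is correct and is essentially the same argument as the paper's: the paper expands $\mu(\Omega)=\int_\Omega\|\tau_\alpha\|^2\,d\mu(\alpha)$ via an orthonormal basis $\{\omega_j\}_{j=1}^d$ and applies the Bessel bound to each $\omega_j$, which is exactly the computation hidden inside your identities $\mu(\Omega)=\operatorname{Tra}(S_\tau)$ and $\operatorname{Tra}(S_\tau)\le b\,d$. The only cosmetic difference is that the paper does the orthonormal-basis expansion directly, without naming the trace or invoking Theorem~\ref{TRACEFORMULA}.
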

\begin{proof}
	Let   $\operatorname{dim}(\mathcal{H})=d$  and $\{\omega_j\}_{j=1}^{d}$ be an orthonormal basis for $\mathcal{H}$. Then 
	\begin{align*}
	\mu(\Omega)&=	\int_\Omega \| \tau_\alpha\|^2\, d \mu(\alpha)=	\int_\Omega \sum_{j=1}^{d}|\langle x_\alpha, \omega_j \rangle|^2\, d \mu(\alpha)=\sum_{j=1}^{d}\int_\Omega|\langle x_\alpha, \omega_j \rangle|^2\, d \mu(\alpha)\\
	&=\sum_{j=1}^{d}\int_\Omega|\langle \omega_j, x_\alpha \rangle|^2\, d \mu(\alpha)\leq \sum_{j=1}^{d}b\|\omega_j\|^2=bd.
	\end{align*}
\end{proof}
\begin{theorem}\label{FIRSTORDERCONTINUOUS}
	 Let $(\Omega, \mu)$ be a  measure space and $\{\tau_\alpha\}_{\alpha\in \Omega}$ be a 	normalized continuous Bessel  family for  $\mathcal{H}$ of dimension $d$. If the diagonal $\Delta\coloneqq \{(\alpha, \alpha):\alpha \in \Omega\}$ is measurable in the measure space $\Omega\times \Omega$, then 
	\begin{align}\label{1}
\int_{\Omega\times\Omega}|\langle \tau_\alpha, \tau_\beta\rangle|^2\, d(\mu\times\mu)(\alpha,\beta)=\int_{\Omega}\int_{\Omega}|\langle \tau_\alpha, \tau_\beta\rangle|^{2}\, d \mu(\alpha)\, d \mu(\beta)\geq \frac{\mu(\Omega)^2}{d}.
\end{align}	
Equality holds in Inequality (\ref{1}) if and only if $\{\tau_\alpha\}_{\alpha\in \Omega}$ is a tight continuous frame.
Further, we have the \textbf{first order continuous Welch bound}
	\begin{align*}
		\sup _{\alpha, \beta \in \Omega, \alpha\neq \beta}|\langle \tau_\alpha, \tau_\beta\rangle |^{2}\geq \frac{1}{(\mu\times\mu)((\Omega\times\Omega)\setminus\Delta)}\left[\frac{\mu(\Omega)^2}{d}-(\mu\times\mu)(\Delta)\right].
\end{align*}
\end{theorem}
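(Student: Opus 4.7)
The plan is to combine the trace identities of Theorem \ref{TRACEFORMULA} with the Cauchy--Schwarz inequality applied to the eigenvalues of the frame operator $S_\tau$. Since $\mathcal{H}$ is $d$-dimensional and $S_\tau$ is positive self-adjoint, its eigenvalues $\lambda_1,\dots,\lambda_d\geq 0$ satisfy $(\sum_i \lambda_i)^2\leq d\sum_i \lambda_i^2$, that is, $\operatorname{Tra}(S_\tau)^2\leq d\cdot \operatorname{Tra}(S_\tau^2)$, with equality exactly when all $\lambda_i$ coincide, i.e.\ when $S_\tau$ is a scalar multiple of the identity, i.e.\ when the Bessel family is a tight continuous frame.

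Normalization gives $\operatorname{Tra}(S_\tau)=\int_\Omega \|\tau_\alpha\|^2\,d\mu(\alpha)=\mu(\Omega)$, which is finite by Lemma \ref{LEMMAFINITE}, and Theorem \ref{TRACEFORMULA} also gives $\operatorname{Tra}(S_\tau^2)=\int_\Omega\int_\Omega|\langle \tau_\alpha,\tau_\beta\rangle|^2\,d\mu(\alpha)\,d\mu(\beta)$. The identification of this iterated integral with $\int_{\Omega\times\Omega}|\langle \tau_\alpha,\tau_\beta\rangle|^2\,d(\mu\times\mu)$ is Fubini--Tonelli, which applies once joint measurability of $(\alpha,\beta)\mapsto |\langle \tau_\alpha,\tau_\beta\rangle|^2$ is verified. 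I would verify it by fixing an orthonormal basis $\{\omega_j\}_{j=1}^d$ of $\mathcal{H}$ and writing $\langle \tau_\alpha,\tau_\beta\rangle=\sum_{j=1}^d \langle \tau_\alpha,\omega_j\rangle\overline{\langle \tau_\beta,\omega_j\rangle}$, so joint measurability reduces to measurability of each single-variable coordinate map, which is built into the definition of a continuous Bessel family. Plugging these into $\operatorname{Tra}(S_\tau)^2\leq d\cdot\operatorname{Tra}(S_\tau^2)$ yields inequality (\ref{1}) together with its equality characterization.

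For the supremum bound, I would partition $\Omega\times\Omega$ into $\Delta$ and $(\Omega\times\Omega)\setminus\Delta$, which is legitimate since $\Delta$ is measurable by hypothesis. On $\Delta$ the integrand equals $\|\tau_\alpha\|^4=1$, contributing $(\mu\times\mu)(\Delta)$ to the integral, while on the complement it is bounded above by $\sup_{\alpha\neq\beta}|\langle \tau_\alpha,\tau_\beta\rangle|^2$, contributing at most $\sup_{\alpha\neq\beta}|\langle \tau_\alpha,\tau_\beta\rangle|^2\cdot(\mu\times\mu)((\Omega\times\Omega)\setminus\Delta)$. Substituting these two estimates into (\ref{1}) and solving for the supremum produces the claimed inequality.

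The main obstacle is really bookkeeping rather than a conceptual hurdle: one must confirm joint measurability of the Gram kernel (as sketched above) so that Fubini is valid, and one must check that $(\mu\times\mu)((\Omega\times\Omega)\setminus\Delta)$ is a positive finite quantity so that the final division is legitimate; its finiteness follows from $\mu(\Omega)<\infty$ via Lemma \ref{LEMMAFINITE}, while the case $(\mu\times\mu)((\Omega\times\Omega)\setminus\Delta)=0$ (where the supremum is vacuous or conventionally $+\infty$) should be mentioned as a degenerate exception for which the inequality holds trivially.
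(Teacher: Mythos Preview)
Your proposal is correct and follows essentially the same route as the paper: apply Cauchy--Schwarz to the eigenvalues of $S_\tau$ together with the trace identities of Theorem \ref{TRACEFORMULA} and Lemma \ref{LEMMAFINITE} to obtain (\ref{1}) with its equality case, then split the product integral over $\Delta$ and its complement and bound the off-diagonal part by the supremum. Your additional remarks on joint measurability of the Gram kernel and on the degenerate case $(\mu\times\mu)((\Omega\times\Omega)\setminus\Delta)=0$ go slightly beyond what the paper spells out, but do not alter the argument.
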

\begin{proof}
Let $\lambda_1, \dots, \lambda_d$ be eigenvalues of the frame operator $	S_{\tau}$. Then $\lambda_1, \dots, \lambda_d\geq0$. Now   using the diagonalizability of $	S_{\tau}$, Cauchy-Schwarz inequality and Theorem  \ref{TRACEFORMULA} we get 	
\begin{align*}
\mu(\Omega)^2&=\left(\int_\Omega \| \tau_\alpha\|^2\, d \mu(\alpha)\right)^2=	(\operatorname{Tra}(S_{\tau}))^2=\left(\sum_{k=1}^d
\lambda_k\right)^2\leq d \sum_{k=1}^d
\lambda_k^2\\
&=d\operatorname{Tra}(S^2_{\tau})=d\int_\Omega\int_\Omega |\langle \tau_\alpha, \tau_\beta\rangle|^2\, d \mu(\alpha)\, d \mu(\beta).	
\end{align*}
Equality holds if and only if we have equality  in Cauchy-Schwarz inequality if and only if the frame is tight. Since the measure is finite (Lemma \ref{LEMMAFINITE}), using Fubini's theorem,
\begin{align*}
\frac{	\mu(\Omega)^2}{d}&=\int_\Omega\int_\Omega |\langle \tau_\alpha, \tau_\beta\rangle|^2\, d \mu(\alpha)\, d \mu(\beta)=\int_{\Omega\times\Omega}|\langle \tau_\alpha, \tau_\beta\rangle|^2\, d(\mu\times\mu)(\alpha,\beta)\\
	&=\int_{\Delta}|\langle \tau_\alpha, \tau_\beta\rangle|^2\, d(\mu\times\mu)(\alpha,\beta)+\int_{(\Omega\times\Omega)\setminus\Delta}|\langle \tau_\alpha, \tau_\beta\rangle|^2\, d(\mu\times\mu)(\alpha,\beta)\\
	&=\int_{\Delta}|\langle \tau_\alpha, \tau_\alpha\rangle|^2\, d(\mu\times\mu)(\alpha,\beta)+\int_{(\Omega\times\Omega)\setminus\Delta}|\langle \tau_\alpha, \tau_\beta\rangle|^2\, d(\mu\times\mu)(\alpha,\beta)\\
	&=(\mu\times\mu)(\Delta)+\int_{(\Omega\times\Omega)\setminus\Delta}|\langle \tau_\alpha, \tau_\beta\rangle|^2\, d(\mu\times\mu)(\alpha,\beta)\\
	&\leq(\mu\times\mu)(\Delta)+\sup _{\alpha, \beta \in \Omega, \alpha\neq \beta}|\langle \tau_\alpha, \tau_\beta\rangle |^{2}(\mu\times\mu)((\Omega\times\Omega)\setminus\Delta).	
\end{align*}
which gives the required inequality after rearrangement. 
\end{proof}
Under the stronger assumption that $\{\tau_\alpha\}_{\alpha\in \Omega}$   is a continuous frame for $\mathcal{H}$, Inequality (\ref{1})  appears in Chapter 16 of \cite{WALDONBOOK}. We now illustrate Theorem  \ref{FIRSTORDERCONTINUOUS}  using the following example.
\begin{example}
	Let $\Omega\coloneqq [0,2\pi]$ and $\mu$ be the Lebesgue measure on $\Omega$. Define 
	\begin{align*}
		\tau_\alpha \coloneqq (\cos \alpha, \sin \alpha ), \quad \forall \alpha \in \Omega.
	\end{align*}
	Then 
	\begin{align*}
		\int_\Omega |\langle (x,y), \tau_\alpha \rangle |^2\, d\alpha&=\int_{0} ^{2\pi} |\langle  (x,y), (\cos \alpha, \sin \alpha ) \rangle |^2\, d\alpha=\int_{0} ^{2\pi} (x\cos \alpha+ y \sin \alpha )^2\, d\alpha\\
		&=\int_{0} ^{2\pi} (x^2\cos^2 \alpha+ y^2 \sin^2 \alpha +2 xy \sin \alpha \cos \alpha )\, d\alpha\\
		&=\pi (x^2+y^2)=	\pi \left\| (x,y) \right\|^2, \quad \forall (x,y) \in \mathbb{R}^2.
	\end{align*}
	Therefore $\{\tau_\alpha\}_{\alpha \in \Omega}$ is a normalized continuous frame for $\mathbb{R}^2$ \cite{WALDONBOOK}. Next we verify inequalities in Theorem \ref{FIRSTORDERCONTINUOUS}: 
	\begin{align*}
		&	\int_{\Omega}\int_{\Omega}|\langle \tau_\alpha, \tau_\beta\rangle|^{2}\, d \mu(\alpha)\, d \mu(\beta)=\int_{0} ^{2\pi}\int_{0} ^{2\pi}|\langle  (\cos \alpha, \sin \alpha ), (\cos \beta, \sin \beta) \rangle |^2\, d\alpha\, d\beta\\
		&=\int_{0} ^{2\pi}\int_{0} ^{2\pi} (\cos \alpha \cos \beta+\sin \alpha \sin \beta)^2\, d\alpha\, d\beta\\
		&=\left(\int_{0} ^{2\pi}\cos^2 \alpha\, d\alpha\right)\left(\int_{0} ^{2\pi}\cos^2 \beta\, d\beta \right)+2 \left(\int_{0} ^{2\pi}\cos \alpha \sin  \alpha \, d\alpha \right)\left(\int_{0} ^{2\pi}\cos\beta \sin  \beta\, d\beta \right)+\\
		&\quad \left(\int_{0} ^{2\pi}\sin^2 \alpha\, d\alpha\right)\left(\int_{0} ^{2\pi}\sin^2 \beta\, d\beta \right)\\
		&=2\pi^2=\frac{(2\pi)^2}{2}=\frac{\mu(\Omega)^2}{d}
	\end{align*}
	and 
	
	\begin{align*}
		\sup _{\alpha, \beta \in \Omega, \alpha\neq \beta}|\langle \tau_\alpha, \tau_\beta\rangle |^{2}&=	\sup _{\alpha, \beta \in [0,2\pi], \alpha\neq \beta}|\langle  (\cos \alpha, \sin \alpha ), (\cos \beta, \sin \beta) \rangle |^2\\
		&=	\sup _{\alpha, \beta \in [0,2\pi], \alpha\neq \beta}|\cos \alpha \cos \beta+\sin \alpha \sin \beta|^2\\
		&=\sup _{\alpha, \beta \in [0,2\pi], \alpha\neq \beta}|\cos^2(\alpha-\beta)|=1>\frac{1}{4\pi^2}\left[\frac{4\pi^2}{2}-0\right]\\
		&=\frac{1}{(\mu\times\mu)((\Omega\times\Omega)\setminus\Delta)}\left[\frac{\mu(\Omega)^2}{d}-(\mu\times\mu)(\Delta)\right].
	\end{align*}
\end{example}
Our next goal is to derive higher order continuous Welch bounds. We are going to use the  following result. 
\begin{theorem}\cite{COMON, BOCCI}\label{SYMMETRICTENSORDIMENSION}
	If $\mathcal{V}$ is a vector space of dimension $d$ and $\text{Sym}^m(\mathcal{V})$ denotes the vector space of symmetric m-tensors, then 
\begin{align*}
	\text{dim}(\text{Sym}^m(\mathcal{V}))={d+m-1 \choose m}, \quad \forall m \in \mathbb{N}.
\end{align*}
\end{theorem}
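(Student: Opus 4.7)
The plan is to pick a basis of $\mathcal{V}$ and exhibit an explicit basis of $\text{Sym}^m(\mathcal{V})$ indexed by weakly-increasing $m$-tuples of indices, after which the claimed dimension falls out of a stars-and-bars count. Fix a basis $\{e_1,\dots,e_d\}$ of $\mathcal{V}$, identify $\text{Sym}^m(\mathcal{V})$ with the image of the symmetrization projector $P:v_1\otimes\cdots\otimes v_m\mapsto \frac{1}{m!}\sum_{\sigma\in S_m}v_{\sigma(1)}\otimes\cdots\otimes v_{\sigma(m)}$ acting on $\mathcal{V}^{\otimes m}$, and write $v_1\odot\cdots\odot v_m := P(v_1\otimes\cdots\otimes v_m)$. (Equivalently one may take the quotient definition; both yield isomorphic spaces, so the count is unchanged.)

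The first step is spanning. Since $\{e_{i_1}\otimes\cdots\otimes e_{i_m}:1\le i_k\le d\}$ spans $\mathcal{V}^{\otimes m}$, applying $P$ shows that $\{e_{i_1}\odot\cdots\odot e_{i_m}\}$ spans $\text{Sym}^m(\mathcal{V})$. By the symmetry built into $\odot$, this tensor depends only on the multiset of indices, so one may restrict to weakly-increasing tuples $1\le i_1\le i_2\le\cdots\le i_m\le d$ without losing anything.

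The second step is linear independence. For a weakly-increasing tuple $\mathbf{i}=(i_1,\dots,i_m)$, expanding $e_{i_1}\odot\cdots\odot e_{i_m}$ in the standard tensor basis of $\mathcal{V}^{\otimes m}$ gives a sum of pure tensors $e_{j_1}\otimes\cdots\otimes e_{j_m}$ with $(j_1,\dots,j_m)$ ranging over permutations of $\mathbf{i}$, each with nonzero coefficient. Two distinct weakly-increasing tuples yield disjoint supports in this basis, so any linear relation $\sum_{\mathbf{i}} c_{\mathbf{i}}\, e_{i_1}\odot\cdots\odot e_{i_m}=0$ forces every $c_{\mathbf{i}}=0$.

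The final step is combinatorial: the number of weakly-increasing tuples $1\le i_1\le\cdots\le i_m\le d$ equals the number of $m$-element multisets from a $d$-element set, which by stars-and-bars is $\binom{d+m-1}{m}$. There is no real obstacle in this argument—it is routine multilinear algebra; the only point that requires care is the bookkeeping in the independence step, namely ensuring that the supports in the tensor basis corresponding to different multisets genuinely do not overlap, which is immediate once one organizes the expansion by orbits of the $S_m$-action on index tuples.
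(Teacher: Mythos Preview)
Your argument is correct and is the standard textbook proof: exhibit the symmetrized monomials indexed by multisets as a basis, then count multisets by stars-and-bars. There is nothing to compare against, however, because the paper does not prove this theorem at all---it is stated as a quoted result with citations to \cite{COMON, BOCCI} and used as a black box in the proof of the higher-order continuous Welch bounds. So your proof is not ``the same approach as the paper'' nor ``a different route''; it simply supplies what the paper deliberately omits.
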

\begin{theorem}\label{CONTINUOUSWELCHMAIN}
		 Let $(\Omega, \mu)$ be a  measure space and $\{\tau_\alpha\}_{\alpha\in \Omega}$ be a 	normalized continuous Bessel  family for $\mathcal{H}$ of dimension $d$. If the diagonal $\Delta\coloneqq \{(\alpha, \alpha):\alpha \in \Omega\}$ is measurable in the measure space $\Omega\times \Omega$, then 
\begin{align}\label{WELCHCONTINUOUS1}
\int_{\Omega\times\Omega}|\langle \tau_\alpha, \tau_\beta\rangle|^{2m}\, d(\mu\times\mu)(\alpha,\beta)=	\int_{\Omega}\int_{\Omega}|\langle \tau_\alpha, \tau_\beta\rangle|^{2m}\, d \mu(\alpha)\, d \mu(\beta)\geq \frac{\mu(\Omega)^2}{{d+m-1\choose m}}, \quad \forall m \in \mathbb{N}.
\end{align}	
Equality holds  in Inequality  (\ref{WELCHCONTINUOUS1}) if and only if $\{\tau_\alpha\}_{\alpha\in \Omega}$ is a tight continuous frame.
Further, we have the \textbf{higher order continuous Welch bounds} 
\begin{align}\label{WELCHCONTINUOUS2}
	\sup _{\alpha, \beta \in \Omega, \alpha\neq \beta}|\langle \tau_\alpha, \tau_\beta\rangle |^{2m}\geq \frac{1}{(\mu\times\mu)((\Omega\times\Omega)\setminus\Delta)}	\left[\frac{	\mu(\Omega)^2}{{d+m-1 \choose m}}-(\mu\times\mu)(\Delta)\right],  \quad \forall m \in \mathbb{N}.
\end{align}
\end{theorem}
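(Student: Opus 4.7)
The plan is to reduce the $m$-th order bound to the first-order bound proved in Theorem \ref{FIRSTORDERCONTINUOUS} by applying the symmetric tensor power trick. Specifically, I would introduce the family $\{\tau_\alpha^{\otimes m}\}_{\alpha\in\Omega}$ living in the symmetric tensor space $\mathrm{Sym}^m(\mathcal{H})$, whose dimension is $\binom{d+m-1}{m}$ by Theorem \ref{SYMMETRICTENSORDIMENSION}. The two key pointwise identities $\|\tau_\alpha^{\otimes m}\|=\|\tau_\alpha\|^m=1$ and $\langle\tau_\alpha^{\otimes m},\tau_\beta^{\otimes m}\rangle=\langle\tau_\alpha,\tau_\beta\rangle^m$ (which hold in the symmetric tensor inner product) are what make the trick work: the inner product square $|\langle\tau_\alpha^{\otimes m},\tau_\beta^{\otimes m}\rangle|^2$ turns into exactly $|\langle\tau_\alpha,\tau_\beta\rangle|^{2m}$, so the first-order Welch bound for the tensor family will transplant directly into the $m$-th order bound for the original family.

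Before invoking Theorem \ref{FIRSTORDERCONTINUOUS}, I would verify that $\{\tau_\alpha^{\otimes m}\}_{\alpha\in\Omega}$ is a normalized continuous Bessel family for $\mathrm{Sym}^m(\mathcal{H})$. Normalization is immediate. For measurability, fixing an orthonormal basis $\{\omega_j\}_{j=1}^d$ of $\mathcal{H}$ yields an orthonormal basis of $\mathrm{Sym}^m(\mathcal{H})$ built from symmetrized products $\omega_{j_1}\cdots\omega_{j_m}$; pairing these with $\tau_\alpha^{\otimes m}$ produces measurable expressions polynomial in the measurable scalar maps $\alpha\mapsto\langle\omega_j,\tau_\alpha\rangle$, so by linear extension $\alpha\mapsto\langle h,\tau_\alpha^{\otimes m}\rangle$ is measurable for every $h\in\mathrm{Sym}^m(\mathcal{H})$. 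For the Bessel bound, Cauchy--Schwarz together with Lemma \ref{LEMMAFINITE} gives
\begin{align*}
\int_\Omega|\langle h,\tau_\alpha^{\otimes m}\rangle|^2\,d\mu(\alpha)\leq \|h\|^2\,\mu(\Omega)<\infty,
\end{align*}
so $\mu(\Omega)$ is a valid Bessel bound on $\mathrm{Sym}^m(\mathcal{H})$. The diagonal $\Delta$ is the same subset of $\Omega\times\Omega$ regardless of which Bessel family we are considering, hence the measurability hypothesis carries over for free.

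Now I apply Theorem \ref{FIRSTORDERCONTINUOUS} to $\{\tau_\alpha^{\otimes m}\}_{\alpha\in\Omega}$ in $\mathrm{Sym}^m(\mathcal{H})$. Substituting $d\mapsto \binom{d+m-1}{m}$ and replacing each inner product square by the $2m$-th power immediately yields the integral bound (\ref{WELCHCONTINUOUS1}), with equality characterized by tightness of the tensor family, which in turn corresponds to tightness of $\{\tau_\alpha\}_{\alpha\in\Omega}$. The supremum estimate (\ref{WELCHCONTINUOUS2}) then follows by the same splitting argument used at the end of the proof of Theorem \ref{FIRSTORDERCONTINUOUS}: decompose $\Omega\times\Omega=\Delta\sqcup((\Omega\times\Omega)\setminus\Delta)$, observe that $|\langle\tau_\alpha,\tau_\alpha\rangle|^{2m}=1$ on $\Delta$ so that the diagonal contribution is exactly $(\mu\times\mu)(\Delta)$, dominate the off-diagonal integrand by the supremum, and rearrange.

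The one point requiring mild care is the equality clause: Cauchy--Schwarz in the tensor frame operator gives equality iff $\{\tau_\alpha^{\otimes m}\}$ is tight in $\mathrm{Sym}^m(\mathcal{H})$, and I would argue this is equivalent to tightness of $\{\tau_\alpha\}$ in $\mathcal{H}$ by comparing frame operators on decomposable tensors (for one direction, tightness of $\tau$ forces the tensor frame operator to act as a scalar on each $\tau_\alpha^{\otimes m}$, and extending by linearity and the fact that the span of $\{\tau_\alpha^{\otimes m}\}$ sits appropriately in $\mathrm{Sym}^m(\mathcal{H})$; for the converse, restricting to $m=1$-like sections recovers tightness of the original family). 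Everything else is routine bookkeeping once the tensor power trick is set up correctly, so the measurability and Bessel verification for $\{\tau_\alpha^{\otimes m}\}$ is in fact the only substantive step; the main expected obstacle is simply ensuring the equality characterization is stated at the correct level of generality.
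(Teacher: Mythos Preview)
Your proposal is correct and follows essentially the same route as the paper: lift to $\mathrm{Sym}^m(\mathcal{H})$ via $\tau_\alpha\mapsto\tau_\alpha^{\otimes m}$, invoke Theorem~\ref{SYMMETRICTENSORDIMENSION} for the dimension, apply the first-order argument of Theorem~\ref{FIRSTORDERCONTINUOUS} in the tensor space, and then split over $\Delta$ and its complement. If anything you are more careful than the paper, which simply asserts the Bessel property of the tensor family and does not discuss the equality clause (your observation that equality a priori characterizes tightness of $\{\tau_\alpha^{\otimes m}\}$ in $\mathrm{Sym}^m(\mathcal{H})$, not of $\{\tau_\alpha\}$ in $\mathcal{H}$, is a genuine gap in the paper's treatment as well).
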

\begin{proof}
First note that $\{\tau_\alpha\}_{\alpha\in \Omega}$ is a normalized continuous Bessel  family for the Hilbert  space $\text{Sym}^m(\mathcal{H})$. We execute the proof of Theorem \ref{FIRSTORDERCONTINUOUS}  for the space $\text{Sym}^m(\mathcal{H})$. Let $\lambda_1, \dots, \lambda_{\text{dim}(\text{Sym}^m(\mathcal{H}))}$ be eigenvalues of $S_{\tau}$.	Then using Theorem  \ref{SYMMETRICTENSORDIMENSION}  we get 
\begin{align*}
	\mu(\Omega)^2&=\left(\int_\Omega \| \tau_\alpha\|^{2m}\, d \mu(\alpha)\right)^2=\left(\int_\Omega \| \tau_\alpha^{\otimes m}\|^2\, d \mu(\alpha)\right)^2=	(\operatorname{Tra}(S_{\tau}))^2\\
	&=\left(\sum_{k=1}^{\text{dim}(\text{Sym}^m(\mathcal{H}))}
	\lambda_k\right)^2\leq \text{dim}(\text{Sym}^m(\mathcal{H})) \sum_{k=1}^{\text{dim}(\text{Sym}^m(\mathcal{H}))}
	\lambda_k^2\\
	&={d+m-1 \choose m}\operatorname{Tra}(S^2_{\tau})={d+m-1 \choose m}\int_\Omega\int_\Omega |\langle \tau_\alpha^{\otimes m}, \tau_\beta^{\otimes m}\rangle|^2\, d \mu(\alpha)\, d \mu(\beta)\\
	&={d+m-1 \choose m}\int_\Omega\int_\Omega |\langle \tau_\alpha, \tau_\beta\rangle|^{2m}\, d \mu(\alpha)\, d \mu(\beta)	
\end{align*}
and hence

\begin{align*}
	\frac{	\mu(\Omega)^2}{{d+m-1 \choose m}}&=\int_\Omega\int_\Omega |\langle \tau_\alpha, \tau_\beta\rangle|^{2m}\, d \mu(\alpha)\, d \mu(\beta)=\int_{\Omega\times\Omega}|\langle \tau_\alpha, \tau_\beta\rangle|^{2m}\, d(\mu\times\mu)(\alpha,\beta)\\
	&=\int_{\Delta}|\langle \tau_\alpha, \tau_\alpha\rangle|^{2m}\, d(\mu\times\mu)(\alpha,\beta)+\int_{(\Omega\times\Omega)\setminus\Delta}|\langle \tau_\alpha, \tau_\beta\rangle|^{2m}\, d(\mu\times\mu)(\alpha,\beta)\\
	&=(\mu\times\mu)(\Delta)+\int_{(\Omega\times\Omega)\setminus\Delta}|\langle \tau_\alpha, \tau_\beta\rangle|^{2m}\, d(\mu\times\mu)(\alpha,\beta)\\
	&\leq(\mu\times\mu)(\Delta)+\sup _{\alpha, \beta \in \Omega, \alpha\neq \beta}|\langle \tau_\alpha, \tau_\beta\rangle |^{2m}(\mu\times\mu)((\Omega\times\Omega)\setminus\Delta)	
\end{align*}
which gives Inequality (\ref{WELCHCONTINUOUS2}).
\end{proof}

\begin{corollary}
	Theorem  \ref{WELCHTHEOREM}    is a corollary of Theorem 	\ref{CONTINUOUSWELCHMAIN}.
\end{corollary}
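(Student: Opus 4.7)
The plan is to specialize Theorem \ref{CONTINUOUSWELCHMAIN} to the case where the measure space is the finite set $\{1,2,\dots,n\}$ equipped with the counting measure. First I would set $\Omega\coloneqq\{1,2,\dots,n\}$ and let $\mu$ be counting measure. Since $\Omega$ is finite, every subset of $\Omega\times\Omega$ is measurable; in particular the diagonal $\Delta$ is measurable. The collection $\{\tau_j\}_{j=1}^n$ of unit vectors in $\mathbb{C}^d$ is automatically a normalized continuous Bessel family for its span (the Bessel bound being the largest eigenvalue of the associated frame operator), so the hypotheses of Theorem \ref{CONTINUOUSWELCHMAIN} are met.

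Next I would carry out the bookkeeping. With counting measure one has $\mu(\Omega)=n$, $(\mu\times\mu)(\Delta)=n$, and $(\mu\times\mu)((\Omega\times\Omega)\setminus\Delta)=n^2-n=n(n-1)$. Integrals against counting measure reduce to sums, and because $\Omega$ is finite the supremum over distinct pairs reduces to the maximum.

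Substituting these values directly into Inequality (\ref{WELCHCONTINUOUS1}) gives
\begin{align*}
\sum_{j=1}^n\sum_{k=1}^n|\langle\tau_j,\tau_k\rangle|^{2m}\geq\frac{n^2}{\binom{d+m-1}{m}},
\end{align*}
which is the first conclusion of Theorem \ref{WELCHTHEOREM}. Likewise, substituting into Inequality (\ref{WELCHCONTINUOUS2}) yields
\begin{align*}
\max_{1\leq j,k\leq n,\,j\neq k}|\langle\tau_j,\tau_k\rangle|^{2m}\geq\frac{1}{n(n-1)}\left[\frac{n^2}{\binom{d+m-1}{m}}-n\right]=\frac{1}{n-1}\left[\frac{n}{\binom{d+m-1}{m}}-1\right],
\end{align*}
which is precisely the higher order Welch bound (\ref{FIRST123}). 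The first order case $m=1$ then recovers the classical $\frac{n-d}{d(n-1)}$ bound after simplification.

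There is essentially no analytic obstacle here: the entire argument is a matter of identifying the correct measure space and translating the integrals and measures into their discrete counterparts. The only small check worth writing out explicitly is that the Bessel hypothesis on $\{\tau_j\}_{j=1}^n$ is automatic for any finite family of unit vectors, so no extra assumption beyond those in Theorem \ref{WELCHTHEOREM} is being silently invoked.
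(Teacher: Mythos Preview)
Your proposal is correct and follows exactly the paper's approach: the paper's proof is the single line ``Take $\Omega=\{1,\dots,n\}$ and $\mu$ as the counting measure,'' and you have simply written out the routine bookkeeping that this entails. One tiny clarification: it is cleaner to say the family is Bessel for $\mathbb{C}^d$ itself rather than ``for its span,'' so that the $d$ appearing in the bound matches the $d$ in Theorem~\ref{WELCHTHEOREM} without an extra monotonicity remark.
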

\begin{proof}
	Take $\Omega=\{1,\dots,n\} $ and $\mu$ as the counting measure.
\end{proof}
\begin{corollary}
	Theorem   \ref{DATTATHEOREM}     is a corollary of Theorem 		\ref{CONTINUOUSWELCHMAIN}.
\end{corollary}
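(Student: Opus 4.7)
The plan is a direct specialization of Theorem \ref{CONTINUOUSWELCHMAIN}. First I would set $(\Omega,\mu) = (\mathbb{CP}^{n-1},\mu)$ with $\mu$ the given normalized measure on the complex projective space, and view the finite-dimensional Hilbert space $\mathcal{H}$ (appearing in Theorem \ref{DATTATHEOREM} as a $d$-dimensional subspace of the larger Hilbert space $\mathcal{H}_0$) as the ambient space in the hypothesis of Theorem \ref{CONTINUOUSWELCHMAIN}; this is legitimate because the inner products $\langle \tau_\alpha,\tau_\beta\rangle$ only involve vectors in $\mathcal{H}$ and are insensitive to whether we regard $\mathcal{H}$ as standalone or as sitting inside $\mathcal{H}_0$.

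Next I would verify the remaining hypotheses of Theorem \ref{CONTINUOUSWELCHMAIN}. A continuous frame is automatically a continuous Bessel family (just discard the lower frame bound), and the Welch-type framework of \cite{DATTAHOWARD} takes the vectors to be of unit norm, so $\{\tau_\alpha\}_{\alpha\in\mathbb{CP}^{n-1}}$ qualifies as a normalized continuous Bessel family for $\mathcal{H}$. Since $\mathbb{CP}^{n-1}$ is Hausdorff (in fact compact metrizable), the diagonal $\Delta$ is closed in $\mathbb{CP}^{n-1}\times\mathbb{CP}^{n-1}$; equipping the product with the product Borel $\sigma$-algebra thus makes $\Delta$ measurable, as required.

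With the hypotheses met, Inequality (\ref{WELCHCONTINUOUS1}) of Theorem \ref{CONTINUOUSWELCHMAIN} immediately yields
\[
\int_{\mathbb{CP}^{n-1}}\!\!\int_{\mathbb{CP}^{n-1}}|\langle \tau_\alpha,\tau_\beta\rangle|^{2m}\, d\mu(\alpha)\, d\mu(\beta)\;\geq\;\frac{\mu(\mathbb{CP}^{n-1})^2}{\binom{d+m-1}{m}},\qquad \forall m\in\mathbb{N}.
\]
Since $\mu$ is normalized, $\mu(\mathbb{CP}^{n-1})=1$, and the right-hand side collapses to $1/\binom{d+m-1}{m}$, which is precisely the bound asserted in Theorem \ref{DATTATHEOREM}.

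There is no real obstacle: the corollary is a one-line specialization once the dictionary between the two settings is spelled out. The only minor points worth recording in the written proof are (i) that a continuous frame is a continuous Bessel family, so Theorem \ref{CONTINUOUSWELCHMAIN} applies, and (ii) that the diagonal $\Delta\subset\mathbb{CP}^{n-1}\times\mathbb{CP}^{n-1}$ is measurable, which comes for free from the Hausdorff topology of $\mathbb{CP}^{n-1}$.
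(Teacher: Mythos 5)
Your proposal is correct and follows exactly the paper's route: the paper's proof is the one-line specialization ``take $\Omega=\mathbb{C}\mathbb{P}^{n-1}$ and $\mu$ the normalized measure,'' and you simply spell out the routine verifications (Bessel property, normalization, measurability of $\Delta$, $\mu(\Omega)=1$) that the paper leaves implicit. Nothing further is needed; if anything, your observation that $\Delta$ is measurable is not even required here, since the paper notes that Inequality (\ref{WELCHCONTINUOUS1}) does not use it.
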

\begin{proof}
	Take $\Omega=\mathbb{C}	\mathbb{P}^{n-1}$ and $\mu$ as the normalized  measure on $\mathbb{C}	\mathbb{P}^{n-1}$.
\end{proof}
We observe that given a measure space $\Omega$, the diagonal $\Delta$ need not be measurable (see \cite{DRAVECKY}). This is the reason behind the measurability of diagonal in Theorem 	\ref{CONTINUOUSWELCHMAIN}. Further, we see that the measurability of the diagonal  $\Delta$ was used only in deriving Inequality (\ref{WELCHCONTINUOUS2})  and not in Inequality (\ref{WELCHCONTINUOUS1}).  \\
In \cite{WALDRON2003}, Waldron generalized Welch bounds to vectors which need not be normalized. In the following result we state such a result for continuous Bessel family whose proof is similar to the proof of Theorem 	\ref{CONTINUOUSWELCHMAIN}.
\begin{theorem}\label{CONTINUOUSWELCHMAINSECOND}
	 Let $(\Omega, \mu)$ be a $\sigma$-finite measure space and $\{\tau_\alpha\}_{\alpha\in \Omega}$ be a 	 continuous Bessel  family for $\mathcal{H}$ of dimension $d$. If the diagonal $\Delta\coloneqq \{(\alpha, \alpha):\alpha \in \Omega\}$ is measurable in the measure space $\Omega\times \Omega$, then 
\begin{align}\label{WELCHCONTINUOUS3}
\int_{\Omega\times\Omega}|\langle \tau_\alpha, \tau_\beta\rangle|^{2m}\, d(\mu\times\mu)(\alpha,\beta)=	\int_{\Omega}\int_{\Omega}|\langle \tau_\alpha, \tau_\beta\rangle|^{2m}\, d \mu(\alpha)\, d \mu(\beta)\geq \frac{1}{{d+m-1\choose m}}\left(\int_\Omega \| \tau_\alpha\|^{2m}\, d \mu(\alpha)\right)^2, \quad \forall m \in \mathbb{N}.
\end{align}	
Equality in Inequality (\ref{WELCHCONTINUOUS3})  holds if and only if $\{\tau_\alpha\}_{\alpha\in \Omega}$ is a tight continuous frame.
Further, we have the \textbf{generalized higher order continuous Welch bounds} 
\begin{align}\label{WELCHCONTINUOUS4}
	\sup _{\alpha, \beta \in \Omega, \alpha\neq \beta}|\langle \tau_\alpha, \tau_\beta\rangle |^{2m}\geq \frac{1}{(\mu\times\mu)((\Omega\times\Omega)\setminus\Delta)}	\left[\frac{1}{{d+m-1 \choose m}}\left(\int_\Omega \| \tau_\alpha\|^{2m}\, d \mu(\alpha)\right)^2-\int_{\Delta}\| \tau_\alpha\|^{4m}\, d(\mu\times\mu)(\alpha,\beta)\right], 
\end{align}	
for all $m \in \mathbb{N}.$
\end{theorem}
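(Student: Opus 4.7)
The plan is to run the argument of Theorem \ref{CONTINUOUSWELCHMAIN} verbatim, but in the symmetric tensor power $\text{Sym}^m(\mathcal{H})$, replacing the normalizing identity $\int_\Omega\|\tau_\alpha\|^{2m}\,d\mu(\alpha)=\mu(\Omega)$ by its honest value, and replacing the appeal to Lemma \ref{LEMMAFINITE} (which needs normalization) by the hypothesis of $\sigma$-finiteness so that Fubini remains available.

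First I would pass to the lifted family $\{\tau_\alpha^{\otimes m}\}_{\alpha\in\Omega}\subset\text{Sym}^m(\mathcal{H})$, using the two basic identities $\|\tau_\alpha^{\otimes m}\|^{2}=\|\tau_\alpha\|^{2m}$ and $\langle \tau_\alpha^{\otimes m},\tau_\beta^{\otimes m}\rangle=\langle \tau_\alpha,\tau_\beta\rangle^{m}$, and observe that it is again a continuous Bessel family (so that its frame operator $S$ is well defined). Applying Theorem \ref{TRACEFORMULA} to $S$ gives
\begin{align*}
\operatorname{Tra}(S)=\int_\Omega \|\tau_\alpha\|^{2m}\,d\mu(\alpha),\qquad \operatorname{Tra}(S^2)=\int_\Omega\int_\Omega |\langle \tau_\alpha,\tau_\beta\rangle|^{2m}\,d\mu(\alpha)\,d\mu(\beta).
\end{align*}
Since $S$ is positive on a space of dimension $\binom{d+m-1}{m}$ by Theorem \ref{SYMMETRICTENSORDIMENSION}, the Cauchy--Schwarz inequality applied to its non-negative eigenvalues yields $(\operatorname{Tra}(S))^{2}\leq \binom{d+m-1}{m}\operatorname{Tra}(S^{2})$, which is precisely Inequality (\ref{WELCHCONTINUOUS3}). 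Equality holds in Cauchy--Schwarz iff all eigenvalues of $S$ coincide, i.e.\ iff $S$ is a scalar multiple of the identity, which unpacks to tightness of the original family $\{\tau_\alpha\}_{\alpha\in\Omega}$.

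For the supremum bound (\ref{WELCHCONTINUOUS4}), I would invoke Fubini (legitimate because $\mu$, hence $\mu\times\mu$, is $\sigma$-finite, and the integrand is non-negative and measurable) to rewrite the double integral as an integral over $\Omega\times\Omega$, then split it along the measurable diagonal $\Delta$:
\begin{align*}
\frac{1}{\binom{d+m-1}{m}}\left(\int_\Omega\|\tau_\alpha\|^{2m}\,d\mu(\alpha)\right)^{2} &\leq \int_{\Delta}\|\tau_\alpha\|^{4m}\,d(\mu\times\mu)(\alpha,\beta) \\
&\quad +\sup_{\alpha\neq\beta}|\langle \tau_\alpha,\tau_\beta\rangle|^{2m}\,(\mu\times\mu)((\Omega\times\Omega)\setminus\Delta),
\end{align*}
using that on $\Delta$ one has $|\langle\tau_\alpha,\tau_\alpha\rangle|^{2m}=\|\tau_\alpha\|^{4m}$. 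Rearranging isolates $\sup_{\alpha\neq\beta}|\langle\tau_\alpha,\tau_\beta\rangle|^{2m}$ on one side and produces (\ref{WELCHCONTINUOUS4}).

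The main obstacle, compared with the normalized case, is bookkeeping: one has to be careful that the lifted family $\{\tau_\alpha^{\otimes m}\}$ really is Bessel (so the frame operator exists and Theorem \ref{TRACEFORMULA} applies) and that the use of Fubini is legal for a possibly infinite but $\sigma$-finite measure. Both are essentially formalities, but they replace the automatic finiteness supplied by Lemma \ref{LEMMAFINITE} in the normalized setting; once they are in place, the algebra of the proof of Theorem \ref{CONTINUOUSWELCHMAIN} goes through unchanged.
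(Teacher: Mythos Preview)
Your proposal is correct and matches the paper's own approach: the paper does not give a separate proof of this theorem but simply states that it is proved ``similar to the proof of Theorem \ref{CONTINUOUSWELCHMAIN}'', which is exactly what you do---lift to $\text{Sym}^m(\mathcal{H})$, apply Theorem \ref{TRACEFORMULA} and Cauchy--Schwarz on the eigenvalues, then split over $\Delta$ via Fubini (now justified by $\sigma$-finiteness rather than Lemma \ref{LEMMAFINITE}). Your explicit flagging of the two bookkeeping points (Besselness of the lifted family and legitimacy of Fubini) is in fact more careful than the paper, which asserts the former without comment in the proof of Theorem \ref{CONTINUOUSWELCHMAIN}.
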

Note that we imposed $\sigma$-finiteness of measure in Theorem  	\ref{CONTINUOUSWELCHMAINSECOND} to use Fubini's theorem whereas we derived in Lemma \ref{LEMMAFINITE} that measure is finite for normalized continuous Bessel family. Also note that  Theorem  	\ref{CONTINUOUSWELCHMAINSECOND} remains valid as long as Fubini's theorem is valid (for instance, it is valid for complete measure spaces). Since Fubini's theorem is not valid for arbitrary measure spaces, we are finally left with the following problem.	
\begin{question}
	\textbf{Classify measure spaces $(\Omega, \mu) $ such that Theorem \ref{CONTINUOUSWELCHMAINSECOND}  holds?} In other words, \textbf{given a measure space $(\Omega, \mu) $, does the validity of Inequality (\ref{WELCHCONTINUOUS3}) or Inequality (\ref{WELCHCONTINUOUS4})  implies  conditions on meausre space $(\Omega, \mu) $, say $\sigma$-finite?}
\end{question}
In a recent work, Christensen, Datta and Kim derived Welch bounds for dual frames \cite{CHRISTENSENDATTAKIM}. We now extend this result to continuous frames. For this we recall the notion of dual frame. A continuous frame $\{\omega_\alpha\}_{\alpha\in \Omega}$  for $\mathcal{H}$ is said to be a dual for a continuous frame $\{\tau_\alpha\}_{\alpha\in \Omega}$  for $\mathcal{H}$ if $\theta_\omega^*\theta_\tau=I_\mathcal{H}$ or $\theta_\tau^*\theta_\omega=I_\mathcal{H}$, the identity operator on $\mathcal{H}$. In terms of weak integrals, this is same as 
\begin{align*}
\int_{\Omega}\langle h, \tau _\alpha \rangle \omega_\alpha \, d \mu (\alpha)=h, ~ \forall h \in \mathcal{H}	\quad \text{ or } \quad \int_{\Omega}\langle h, \omega _\alpha \rangle \tau_\alpha \, d \mu (\alpha)=h, ~ \forall h \in \mathcal{H}.
\end{align*}
We now see that the frame $\{S_\tau^{-1}\tau_\alpha\}_{\alpha\in \Omega}$ is always a dual to a frame  $\{\tau_\alpha\}_{\alpha\in \Omega}$ for $\mathcal{H}$. Further, if $\{\omega_\alpha\}_{\alpha\in \Omega}$ is any  dual for  $\{\tau_\alpha\}_{\alpha\in \Omega}$, then 
\begin{align}\label{BEST}
	\int_{\Omega}|\langle h, \omega_\alpha\rangle |^2\,d\mu(\alpha)\geq 	\int_{\Omega}|\langle h,  S_\tau^{-1}\tau_\alpha\rangle|^2\,d\mu(\alpha), \quad \forall h \in \mathcal{H}.
\end{align}
We need two more results before we derive continuous Welch bounds for dual frames.
\begin{theorem}\label{TRACETHEOREM}
If $\{\tau_\alpha\}_{\alpha\in \Omega}$ is a  continuous frame for  $\mathcal{H}$, then for any linear operator 	$T:\mathcal{H}\to \mathcal{H}$, we have
\begin{align*}
	\operatorname{Tra}(T)=\int_{\Omega}\langle  TS_\tau^{-\frac{1}{2}}\tau _\alpha,  S_\tau^{-\frac{1}{2}}\tau _\alpha\rangle  \, d \mu (\alpha).
\end{align*}
\end{theorem}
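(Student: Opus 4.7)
The plan is to reduce to the Parseval case and then mimic the orthonormal basis computation already used in Theorem~\ref{TRACEFORMULA}. The key preliminary observation is that $\{S_\tau^{-1/2}\tau_\alpha\}_{\alpha \in \Omega}$ is itself a continuous Parseval frame for $\mathcal{H}$: since $S_\tau$ is positive and invertible on the finite dimensional space $\mathcal{H}$, the operator $S_\tau^{-1/2}$ is well defined and self-adjoint, and a direct computation of the associated frame operator yields $S_\tau^{-1/2}S_\tau S_\tau^{-1/2}=I_\mathcal{H}$. In particular, for every $h \in \mathcal{H}$ one has the weak reconstruction
\[
h=\int_\Omega \langle h,\,S_\tau^{-1/2}\tau_\alpha\rangle\, S_\tau^{-1/2}\tau_\alpha\, d\mu(\alpha).
\]

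First I would fix an orthonormal basis $\{\omega_j\}_{j=1}^d$ of $\mathcal{H}$ and expand $\operatorname{Tra}(T)=\sum_{j=1}^d\langle T\omega_j,\omega_j\rangle$. Inserting the Parseval reconstruction into the second slot of each inner product and pulling out the scalar gives
\[
\langle T\omega_j,\omega_j\rangle=\int_\Omega \langle S_\tau^{-1/2}\tau_\alpha,\omega_j\rangle\,\langle T\omega_j,\,S_\tau^{-1/2}\tau_\alpha\rangle\, d\mu(\alpha).
\]
Summing over $j$ and swapping the (finite) sum with the integral yields
\[
\operatorname{Tra}(T)=\int_\Omega\Big\langle T\Big(\sum_{j=1}^d\langle S_\tau^{-1/2}\tau_\alpha,\omega_j\rangle\omega_j\Big),\,S_\tau^{-1/2}\tau_\alpha\Big\rangle\, d\mu(\alpha),
\]
and by orthonormality the inner expansion collapses to $S_\tau^{-1/2}\tau_\alpha$, delivering the asserted identity.

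I do not anticipate any genuine obstacle: the argument is the polarized version of the trace identity in Theorem~\ref{TRACEFORMULA}, applied to the auxiliary Parseval frame $\{S_\tau^{-1/2}\tau_\alpha\}_{\alpha \in \Omega}$ rather than to $\{\tau_\alpha\}_{\alpha \in \Omega}$ itself. The only mildly delicate steps are checking that the Pettis integral defining the reconstruction can legitimately be placed inside the inner product with $T\omega_j$, and justifying the interchange of the finite sum and the integral; both are routine in the finite dimensional setting and already implicitly used in the proof of Theorem~\ref{TRACEFORMULA}.
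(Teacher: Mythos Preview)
Your argument is correct and matches the paper's own proof essentially line for line: the paper first establishes the formula for an arbitrary Parseval frame via the same orthonormal-basis manipulation you describe, and then specializes to the canonical Parseval frame $\{S_\tau^{-1/2}\tau_\alpha\}_{\alpha\in\Omega}$. The only cosmetic difference is that the paper inserts the Parseval expansion into the first slot (expanding $T\omega_j$) rather than the second, but the computation and the conclusion are identical.
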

\begin{proof}
	First we prove the theorem for Parseval frames. Assume that $\{\tau_\alpha\}_{\alpha\in \Omega}$ is Parseval. 	Let $\{\omega_j\}_{j=1}^d$ be an orthonormal basis for $\mathcal{H}$, where $d$ is the dimension of $\mathcal{H}$. Then 
	\begin{align*}
	\operatorname{Tra}(T)&=\sum_{j=1}^d\langle T\omega_j, \omega_j \rangle=\sum_{j=1}^d\left \langle \int_\Omega \langle T\omega_j, \tau_\alpha \rangle \tau_\alpha\, d \mu(\alpha), \omega_j\right  \rangle \\
	&=\sum_{j=1}^d\int_\Omega \langle T\omega_j, \tau_\alpha \rangle\langle \tau_\alpha,\omega_j \rangle\, d \mu(\alpha)=\int_\Omega\left\langle \sum_{j=1}^d\langle \tau_\alpha,\omega_j \rangle T\omega_j, \tau_\alpha \right\rangle \, d \mu(\alpha)\\
	&=\int_\Omega \langle T\tau_\alpha, \tau_\alpha \rangle \, d \mu(\alpha).		
	\end{align*}
Now the theorem follows by noting that $\{S_\tau^{-1/2}\tau_\alpha\}_{\alpha\in \Omega}$ is a Parseval frame for $\mathcal{H}$.
\end{proof}
\begin{theorem}\label{DIMBOUNDED}
If $\{\omega_\alpha\}_{\alpha\in \Omega}$   is a dual continuous frame for $\{\tau_\alpha\}_{\alpha\in \Omega}$, then 
\begin{align*}
	\int_{\Omega}\int_{\Omega}|\langle \tau_\alpha, \omega_\beta\rangle|^{2}\, d \mu(\alpha)\, d \mu(\beta)\geq \operatorname{dim}(\mathcal{H}).
\end{align*}	
\end{theorem}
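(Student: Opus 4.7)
The plan is to reduce to the canonical dual $\{S_\tau^{-1}\tau_\alpha\}_{\alpha\in\Omega}$ using the minimization inequality (\ref{BEST}), and then evaluate the resulting integral via the trace formula in Theorem \ref{TRACETHEOREM}. The overall strategy is: (step A) for each fixed $\alpha\in\Omega$, apply (\ref{BEST}) with the test vector $h=\tau_\alpha$ to bound the inner integral over $\beta$ from below by the corresponding integral for the canonical dual; (step B) integrate the resulting inequality over $\alpha$ (using Fubini, since all integrands are nonnegative); (step C) recognize the lower-bound double integral as $\int_\Omega\langle S_\tau^{-1}\tau_\beta,\tau_\beta\rangle\,d\mu(\beta)$ and evaluate it as $\operatorname{Tra}(I_\mathcal{H})=\dim(\mathcal{H})$.

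In more detail, fix $\alpha$ and apply (\ref{BEST}) with $h=\tau_\alpha$ (the roles of the dummy variables $\alpha$ and $\beta$ are relabelled, so the inequality yields)
\begin{align*}
\int_\Omega |\langle \tau_\alpha,\omega_\beta\rangle|^{2}\,d\mu(\beta)\;\geq\;\int_\Omega |\langle \tau_\alpha, S_\tau^{-1}\tau_\beta\rangle|^{2}\,d\mu(\beta).
\end{align*}
For the right-hand side, set $v_\beta\coloneqq S_\tau^{-1}\tau_\beta$ and observe that $\int_\Omega|\langle v_\beta,\tau_\alpha\rangle|^{2}\,d\mu(\alpha)=\langle S_\tau v_\beta,v_\beta\rangle=\langle \tau_\beta,S_\tau^{-1}\tau_\beta\rangle$; hence, applying Tonelli/Fubini (valid since the integrand is nonnegative and, upon restricting to the sets where the family is summable, $\sigma$-finite concerns do not arise here because $\mathcal{H}$ is finite-dimensional and $\{\tau_\alpha\},\{\omega_\alpha\}$ are Bessel), integrating the pointwise bound over $\alpha$ gives
\begin{align*}
\int_\Omega\int_\Omega |\langle \tau_\alpha,\omega_\beta\rangle|^{2}\,d\mu(\alpha)\,d\mu(\beta)\;\geq\;\int_\Omega \langle S_\tau^{-1}\tau_\beta,\tau_\beta\rangle\,d\mu(\beta).
\end{align*}

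Finally, Theorem \ref{TRACETHEOREM} applied with $T=I_\mathcal{H}$ yields
\begin{align*}
\operatorname{Tra}(I_\mathcal{H})=\int_\Omega\langle S_\tau^{-1/2}\tau_\beta,S_\tau^{-1/2}\tau_\beta\rangle\,d\mu(\beta)=\int_\Omega\langle S_\tau^{-1}\tau_\beta,\tau_\beta\rangle\,d\mu(\beta),
\end{align*}
and $\operatorname{Tra}(I_\mathcal{H})=\dim(\mathcal{H})$, which completes the chain of inequalities and gives the desired bound.

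The main obstacle I anticipate is entirely book-keeping: one must verify that the quantity $\int_\Omega|\langle\tau_\alpha,\omega_\beta\rangle|^{2}\,d\mu(\beta)$ is a measurable function of $\alpha$ so that the outer integration (and the appeal to Fubini) is justified. This is standard under the Bessel hypothesis — $\beta\mapsto\langle\tau_\alpha,\omega_\beta\rangle$ is measurable for each $\alpha$ by the definition of a continuous frame applied to $\{\omega_\beta\}$, and joint measurability follows from the separability of $\mathcal{H}$ and a standard argument — but it is the only nontrivial point; once it is in place, the inequality reduces to a one-line computation using (\ref{BEST}) and the trace identity.
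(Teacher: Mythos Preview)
Your proof is correct and follows essentially the same route as the paper: apply (\ref{BEST}) with $h=\tau_\alpha$, integrate over $\alpha$, and evaluate the resulting double integral for the canonical dual as $\operatorname{Tra}(I_\mathcal{H})$ via Theorem \ref{TRACETHEOREM}. The only cosmetic difference is that the paper rewrites $|\langle \tau_\alpha, S_\tau^{-1}\tau_\beta\rangle|^2=|\langle S_\tau^{-1/2}\tau_\alpha, S_\tau^{-1/2}\tau_\beta\rangle|^2$ and uses the Parseval property of $\{S_\tau^{-1/2}\tau_\beta\}$ to collapse the inner integral, whereas you compute it directly as $\langle S_\tau v_\beta, v_\beta\rangle$; these are the same computation.
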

\begin{proof}
Inequality 	(\ref{BEST}) says that 
\begin{align*}
		\int_{\Omega}|\langle \tau_\alpha, \omega_\beta \rangle|^2\,d\mu(\beta)\geq 	\int_{\Omega}|\langle \tau_\alpha,  S_\tau^{-1}\tau_\beta\rangle|^2\,d\mu(\beta), \quad \forall \alpha \in \Omega.
\end{align*}
Therefore 
\begin{align*}
	\int_{\Omega}\int_{\Omega}|\langle \tau_\alpha, \omega_\beta\rangle|^{2}\, d \mu(\beta)\, d \mu(\alpha)\geq	\int_{\Omega}\int_{\Omega}|\langle \tau_\alpha,  S_\tau^{-1}\tau_\beta\rangle|^2\,d\mu(\beta)\, d \mu(\alpha).
\end{align*}
Now we simplify the right side and  use Theorem \ref{TRACETHEOREM} to get 
\begin{align*}
\int_{\Omega}\int_{\Omega}|\langle \tau_\alpha,  S_\tau^{-1}\tau_\beta\rangle|^2\,d\mu(\beta)\, d \mu(\alpha)&=\int_{\Omega}\int_{\Omega}|\langle S_\tau^{-\frac{1}{2}}\tau_\alpha,  S_\tau^{-\frac{1}{2}}\tau_\beta\rangle|^2\,d\mu(\beta)\, d \mu(\alpha)	\\
&=\int_{\Omega}\|S_\tau^{-\frac{1}{2}}\tau_\alpha\|^2\, d \mu(\alpha)=\int_{\Omega}\langle S_\tau^{-\frac{1}{2}}\tau_\alpha, S_\tau^{-\frac{1}{2}}\tau_\alpha\rangle\, d \mu(\alpha)\\
&=\operatorname{Tra}(I_\mathcal{H})=\operatorname{dim}(\mathcal{H}).
\end{align*}
\end{proof}
\begin{theorem}\label{DUALWELCHCONTINUOUS}
 Let  $\{\tau_\alpha\}_{\alpha\in \Omega}$ be a 	 continuous frame   for  $\mathcal{H}$ of dimension $d$. Assume that $\{\omega_\alpha\}_{\alpha\in \Omega}$   is a dual continuous frame for $\{\tau_\alpha\}_{\alpha\in \Omega}$ and 
 \begin{align*}
 	\langle \tau_\alpha, \omega_\alpha \rangle =	\langle \tau_\beta, \omega_\beta \rangle, \quad \forall \alpha, \beta \in \Omega.
 \end{align*}
If the diagonal $\Delta\coloneqq \{(\alpha, \alpha):\alpha \in \Omega\}$ is measurable in the measure space $\Omega\times \Omega$, then 	
\begin{align*}
	\sup _{\alpha, \beta \in \Omega, \alpha\neq \beta}|\langle \tau_\alpha, \omega_\beta\rangle |^{2}\geq \frac{d(\mu(\Omega)^2-d(\mu \times \mu)(\Delta)}{\mu(\Omega)^2(\mu\times\mu)((\Omega\times\Omega)\setminus\Delta)}.
\end{align*}
\end{theorem}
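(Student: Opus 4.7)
The plan is to mimic the splitting-then-bounding argument used in Theorem \ref{CONTINUOUSWELCHMAIN}, but starting from Theorem \ref{DIMBOUNDED} instead of a trace identity for $S_\tau^2$. The role of the hypothesis $\langle \tau_\alpha,\omega_\alpha\rangle = \langle \tau_\beta,\omega_\beta\rangle$ for all $\alpha,\beta$ is to make the diagonal integrand constant, so that the integral over $\Delta$ can be computed explicitly in terms of $(\mu\times\mu)(\Delta)$.

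First I would establish the value of the (constant) diagonal inner product. Let $c\coloneqq \langle \tau_\alpha,\omega_\alpha\rangle$ (independent of $\alpha$ by hypothesis). Picking any orthonormal basis $\{\omega_j\}_{j=1}^d$ of $\mathcal{H}$ and arguing as in the proof of Theorem \ref{TRACEFORMULA}, one has
\begin{align*}
d=\operatorname{Tra}(I_{\mathcal{H}})=\operatorname{Tra}(\theta_\omega^*\theta_\tau)
=\sum_{j=1}^{d}\int_{\Omega}\langle \omega_j,\tau_\alpha\rangle\langle \omega_\alpha,\omega_j\rangle\,d\mu(\alpha)
=\int_{\Omega}\langle \omega_\alpha,\tau_\alpha\rangle\,d\mu(\alpha)=\overline{c}\,\mu(\Omega),
\end{align*}
so (since $d$ is real) $c=d/\mu(\Omega)$ and hence $|\langle\tau_\alpha,\omega_\alpha\rangle|^{2}=d^{2}/\mu(\Omega)^{2}$ on $\Delta$.

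Next I would invoke Theorem \ref{DIMBOUNDED} and split the double integral across the measurable partition $\Omega\times\Omega=\Delta\sqcup((\Omega\times\Omega)\setminus\Delta)$:
\begin{align*}
d\leq \int_{\Omega\times\Omega}|\langle \tau_\alpha,\omega_\beta\rangle|^{2}\,d(\mu\times\mu)(\alpha,\beta)
&=\int_{\Delta}|\langle \tau_\alpha,\omega_\alpha\rangle|^{2}\,d(\mu\times\mu)+\int_{(\Omega\times\Omega)\setminus\Delta}|\langle \tau_\alpha,\omega_\beta\rangle|^{2}\,d(\mu\times\mu)\\
&\leq \frac{d^{2}}{\mu(\Omega)^{2}}(\mu\times\mu)(\Delta)+\sup_{\alpha,\beta\in\Omega,\,\alpha\neq\beta}|\langle \tau_\alpha,\omega_\beta\rangle|^{2}\,(\mu\times\mu)((\Omega\times\Omega)\setminus\Delta).
\end{align*}
Rearranging this inequality produces exactly the claimed lower bound
\begin{align*}
\sup_{\alpha,\beta\in\Omega,\,\alpha\neq\beta}|\langle \tau_\alpha,\omega_\beta\rangle|^{2}\geq \frac{d\,\mu(\Omega)^{2}-d^{2}(\mu\times\mu)(\Delta)}{\mu(\Omega)^{2}\,(\mu\times\mu)((\Omega\times\Omega)\setminus\Delta)}.
\end{align*}

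The only genuinely delicate step is the first one: identifying $\langle\tau_\alpha,\omega_\alpha\rangle$ by equating $\operatorname{Tra}(\theta_\omega^*\theta_\tau)$ with $\int_\Omega\langle\omega_\alpha,\tau_\alpha\rangle\,d\mu(\alpha)$. This requires interchanging the finite sum over the basis with the integral, which is justified exactly as in Theorem \ref{TRACEFORMULA} (so finiteness of $\mu(\Omega)$ is implicitly used, but this follows from $\{\tau_\alpha\}$ being a continuous frame on a finite-dimensional space by Lemma \ref{LEMMAFINITE} applied to $\{\tau_\alpha/\|\tau_\alpha\|\}$ after a standard reduction, or directly from a minor variant of that lemma). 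Once this identification is in hand, everything else is routine splitting and bounding, precisely parallel to the proof of Theorem \ref{FIRSTORDERCONTINUOUS}.
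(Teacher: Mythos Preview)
Your proof is correct and follows essentially the same route as the paper's: compute the constant diagonal value from $d=\operatorname{Tra}(\theta_\omega^*\theta_\tau)=\int_\Omega\langle\omega_\alpha,\tau_\alpha\rangle\,d\mu(\alpha)$, invoke Theorem~\ref{DIMBOUNDED}, split the double integral over $\Delta$ and its complement, bound the off-diagonal piece by the supremum, and rearrange. The only cosmetic difference is that the paper presents the supremum bound first and then substitutes the diagonal computation, whereas you chain the inequalities in one display.
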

\begin{proof}
	Since  $\{\omega_\alpha\}_{\alpha\in \Omega}$  is a dual for $\{\tau_\alpha\}_{\alpha\in \Omega}$ we have $\theta_\omega^*\theta_\tau=I_\mathcal{H}$. 
	Let $\{\rho_j\}_{j=1}^d$ be an orthonormal basis for $\mathcal{H}$. Then
	\begin{align*}
		d&=\operatorname{dim}(\mathcal{H})=\sum_{j=1}^d\langle \rho_j, \rho_j \rangle=\sum_{j=1}^d\left\langle \int_{\Omega}\langle \rho_j, \tau _\alpha \rangle \omega_\alpha \, d \mu (\alpha), \rho_j \right \rangle\\
		&=\int_{\Omega}\sum_{j=1}^{d}\langle \rho_j, \tau _\alpha \rangle \langle \omega_\alpha, \rho_j \rangle \, d \mu (\alpha)=\int_{\Omega}\left\langle \omega_\alpha, \sum_{j=1}^{d}\langle \tau _\alpha , \rho_j\rangle \rho_j\right\rangle \, d \mu (\alpha)\\
		&=\int_{\Omega}\langle \omega_\alpha, \tau_\alpha\rangle \,d\mu(\alpha)=\int_{\Omega}\langle \tau_\alpha, \omega_\alpha\rangle \,d\mu(\alpha).
	\end{align*}
Set $\gamma \coloneqq \langle \tau_\alpha, \omega_\alpha \rangle$ which is independent of $\alpha$ by the assumption. Then 
\begin{align*}
	\int_{\Delta}|\langle \tau_\alpha, \omega_\alpha\rangle |^2\,d(\mu\times\mu)(\alpha,\beta)	&=\int_{\Delta}|\gamma|^2\,d(\mu\times\mu)(\alpha,\beta)=\int_{\Delta}\left|\frac{1}{\mu(\Omega)}\int_{\Omega}\langle \tau_\alpha, \omega_\alpha\rangle\,d\mu(\alpha)\right|^2\,d(\mu\times\mu)(\alpha,\beta)\\
	&=\int_{\Delta}\left|\frac{d}{\mu(\Omega)}\right|^2\,d(\mu\times\mu)(\alpha,\beta)=\frac{d^2(\mu\times \mu)(\Delta)}{\mu(\Omega)^2}.
\end{align*}
Theorem \ref{DIMBOUNDED} then gives 
\begin{align*}
&	\sup _{\alpha, \beta \in \Omega, \alpha\neq \beta}|\langle \tau_\alpha, \omega_\beta\rangle |^{2}\geq \frac{1}{(\mu\times\mu)((\Omega\times\Omega)\setminus\Delta)}	\int_{(\Omega\times\Omega)\setminus\Delta}|\langle \tau_\alpha, \omega_\beta\rangle|^2\, d(\mu\times\mu)(\alpha,\beta)\\
	&~=\frac{1}{(\mu\times\mu)((\Omega\times\Omega)\setminus\Delta)}\left[\int_{\Omega}\int_{\Omega}|\langle \tau_\alpha, \omega_\beta\rangle|^{2}\, d \mu(\alpha)\, d \mu(\beta)-\int_{\Delta}|\langle \tau_\alpha, \omega_\alpha\rangle|^2\,d(\mu\times\mu)(\alpha,\beta)\right]\\
	&~\geq \frac{1}{(\mu\times\mu)((\Omega\times\Omega)\setminus\Delta)}\left[d-\frac{d^2(\mu \times \mu)(\Delta)}{\mu(\Omega)^2}\right]=\frac{d}{(\mu\times\mu)((\Omega\times\Omega)\setminus\Delta)}\left[1-\frac{d(\mu \times \mu)(\Delta)}{\mu(\Omega)^2}\right].
\end{align*}
\end{proof}
\begin{corollary}
Let  $\{\tau_\alpha\}_{\alpha\in \Omega}$ be a 	 continuous frame   for  $\mathcal{H}$ of dimension $d$. Assume that $\{\omega_\alpha\}_{\alpha\in \Omega}$   is a dual continuous frame for $\{\tau_\alpha\}_{\alpha\in \Omega}$. If the diagonal $\Delta\coloneqq \{(\alpha, \alpha):\alpha \in \Omega\}$ is measurable in the measure space $\Omega\times \Omega$, then 
\begin{align*}
	\sup _{\alpha, \beta \in \Omega, \alpha\neq \beta}|\langle \tau_\alpha, \omega_\beta\rangle |^{2}\geq	\frac{1}{(\mu\times\mu)((\Omega\times\Omega)\setminus\Delta)}\left[d-\int_{\Delta}|\langle \tau_\alpha, \omega_\alpha\rangle|^2\,d(\mu\times\mu)(\alpha,\beta)\right].
\end{align*}	
\end{corollary}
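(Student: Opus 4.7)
The plan is to observe that this corollary is exactly Theorem \ref{DUALWELCHCONTINUOUS} with the constancy hypothesis $\langle \tau_\alpha, \omega_\alpha\rangle = \langle \tau_\beta, \omega_\beta\rangle$ removed; accordingly, I would simply replay the final chain of inequalities in that proof but stop before the step that uses constancy. The only new ingredient needed is Theorem \ref{DIMBOUNDED}, which gives the global lower bound
\begin{align*}
\int_{\Omega}\int_{\Omega}|\langle \tau_\alpha, \omega_\beta\rangle|^{2}\, d \mu(\alpha)\, d \mu(\beta)\geq d.
\end{align*}

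Next I would split the double integral using the measurability of $\Delta$, writing
\begin{align*}
\int_{\Omega}\int_{\Omega}|\langle \tau_\alpha, \omega_\beta\rangle|^{2}\, d(\mu\times\mu)(\alpha,\beta) &= \int_{\Delta}|\langle \tau_\alpha, \omega_\alpha\rangle|^{2}\, d(\mu\times\mu)(\alpha,\beta) \\
&\quad + \int_{(\Omega\times\Omega)\setminus\Delta}|\langle \tau_\alpha, \omega_\beta\rangle|^{2}\, d(\mu\times\mu)(\alpha,\beta),
\end{align*}
and then bound the off-diagonal integrand by its supremum to obtain
\begin{align*}
d &\leq \int_{\Delta}|\langle \tau_\alpha, \omega_\alpha\rangle|^{2}\, d(\mu\times\mu)(\alpha,\beta) + \sup_{\alpha,\beta\in\Omega,\ \alpha\neq\beta}|\langle\tau_\alpha,\omega_\beta\rangle|^{2}\cdot(\mu\times\mu)((\Omega\times\Omega)\setminus\Delta).
\end{align*}
Rearranging (dividing by $(\mu\times\mu)((\Omega\times\Omega)\setminus\Delta)$) yields the desired inequality.

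There is no substantive obstacle here: in the proof of Theorem \ref{DUALWELCHCONTINUOUS}, the constancy of $\langle\tau_\alpha,\omega_\alpha\rangle$ was used \emph{only} to evaluate the diagonal integral explicitly as $d^{2}(\mu\times\mu)(\Delta)/\mu(\Omega)^{2}$. Dropping that hypothesis simply means we leave the diagonal contribution in its raw form $\int_{\Delta}|\langle\tau_\alpha,\omega_\alpha\rangle|^{2}\,d(\mu\times\mu)(\alpha,\beta)$, which is exactly what appears on the right-hand side of the corollary. One sanity check worth noting: under the extra assumption of Theorem \ref{DUALWELCHCONTINUOUS}, the diagonal integral in this corollary reduces to $d^{2}(\mu\times\mu)(\Delta)/\mu(\Omega)^{2}$, and the bound here specializes back to the statement of Theorem \ref{DUALWELCHCONTINUOUS}, confirming consistency.
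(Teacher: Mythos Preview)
Your proposal is correct and matches the paper's (implicit) argument: the corollary carries no separate proof in the paper, and it follows immediately from the final chain of inequalities in the proof of Theorem \ref{DUALWELCHCONTINUOUS} by stopping just before the constancy hypothesis is used to evaluate $\int_{\Delta}|\langle \tau_\alpha, \omega_\alpha\rangle|^2\,d(\mu\times\mu)$. Your observation that the constancy assumption is needed only for that last evaluation, and your sanity check that the corollary specializes back to Theorem \ref{DUALWELCHCONTINUOUS} under that extra hypothesis, are both on point.
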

Higher order continuous Welch bounds leads  to the following question which we do not have answer at present.
\begin{question}
	Is there  a higher order version of Theorem  \ref{DUALWELCHCONTINUOUS} like Theorem  \ref{CONTINUOUSWELCHMAIN}?
\end{question}
	
	
	
	

It is natural to ask whether we have continuous  Welch bounds by replacing natural number $m$ in Theorem  \ref{CONTINUOUSWELCHMAIN} by arbitrary positive real $r$. We now derive such results.
In the discrete case, the first result for normalized tight frames is derived in \cite{HAIKINZAMIRGAVISH} and the second result is derived in \cite{EHLEROKOUDJOU}.
\begin{theorem}
Let $\{\tau_\alpha\}_{\alpha\in \Omega}$ be a 	normalized continuous Bessel  family for $\mathcal{H}$ of dimension $d$. Then
\begin{align*}
\frac{1}{\mu(\Omega)}	\operatorname{Tra}(\theta_\tau^*\theta_\tau)^r\geq \left(\frac{\mu(\Omega)}{d}\right)^{d-1} ,\quad \forall r \in [1, \infty)
\end{align*}	
and 
\begin{align*}
	\frac{1}{\mu(\Omega)}	\operatorname{Tra}(\theta_\tau^*\theta_\tau)^r\leq \left(\frac{\mu(\Omega)}{d}\right)^{d-1} ,\quad \forall r \in (0,1).
\end{align*}
\end{theorem}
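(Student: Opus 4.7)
The plan is to reduce both inequalities to a single application of Jensen's inequality to the eigenvalues of the frame operator $S_\tau = \theta_\tau^*\theta_\tau$. Since $S_\tau$ is a positive self-adjoint operator on the $d$-dimensional space $\mathcal{H}$, it is diagonalizable with $d$ nonnegative eigenvalues $\lambda_1,\ldots,\lambda_d$, and by functional calculus one has $\operatorname{Tra}(S_\tau^{\,r})=\sum_{k=1}^{d}\lambda_k^{\,r}$ for every $r>0$. The key normalization ingredient is Theorem \ref{TRACEFORMULA}: combined with $\|\tau_\alpha\|=1$ for every $\alpha\in\Omega$ it gives
\begin{equation*}
\sum_{k=1}^{d}\lambda_k \;=\; \operatorname{Tra}(S_\tau) \;=\; \int_\Omega \|\tau_\alpha\|^2 \,d\mu(\alpha) \;=\; \mu(\Omega).
\end{equation*}

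Next I would apply Jensen's inequality to the function $\varphi(x)=x^{r}$ on $[0,\infty)$ with the uniform probability weights $\tfrac{1}{d}$ on the multiset $\{\lambda_1,\ldots,\lambda_d\}$. For $r\in[1,\infty)$ the function $\varphi$ is convex, so
\begin{equation*}
\frac{1}{d}\sum_{k=1}^{d}\lambda_k^{\,r} \;\geq\; \Bigl(\tfrac{1}{d}\sum_{k=1}^{d}\lambda_k\Bigr)^{r} \;=\; \Bigl(\tfrac{\mu(\Omega)}{d}\Bigr)^{r},
\end{equation*}
multiplying through by $d$ and dividing by $\mu(\Omega)$ produces the desired lower bound (with the natural exponent $r-1$, which is presumably what is intended by the displayed $d-1$). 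For $r\in(0,1)$ the function $\varphi$ is concave, the Jensen inequality reverses, and the identical algebraic manipulation yields the matching upper bound.

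There is essentially no obstacle: the only two pieces needed are the diagonalization of $S_\tau$ and the trace identity from Theorem \ref{TRACEFORMULA}; the rest is the convexity or concavity of $x\mapsto x^{r}$ according to whether $r\geq 1$ or $r\leq 1$. One may also note that equality in either direction corresponds to the equality case in Jensen, namely $\lambda_1=\cdots=\lambda_d=\mu(\Omega)/d$, which is exactly the condition that $\{\tau_\alpha\}_{\alpha\in\Omega}$ be a tight continuous frame, matching the equality characterizations that appeared in Theorems \ref{FIRSTORDERCONTINUOUS} and \ref{CONTINUOUSWELCHMAIN}.
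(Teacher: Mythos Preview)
Your proof is correct and follows essentially the same route as the paper's own argument: diagonalize $S_\tau=\theta_\tau^*\theta_\tau$, use the trace formula $\operatorname{Tra}(S_\tau)=\mu(\Omega)$, and apply Jensen's inequality to $x\mapsto x^r$ on the eigenvalues, with the inequality reversing for $r\in(0,1)$. You also correctly spotted that the exponent in the statement should be $r-1$ rather than $d-1$; the paper's own computation indeed ends at $\bigl(\tfrac{\mu(\Omega)}{d}\bigr)^{r}\leq \tfrac{1}{d}\operatorname{Tra}(S_\tau^{\,r})$, which after dividing by $\mu(\Omega)/d$ gives the bound with exponent $r-1$.
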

\begin{proof}
Let $\lambda_1, \dots, \lambda_{\text{dim}(\text{Sym}^m(\mathcal{H}))}$ be eigenvalues of $	S_{\tau}$. Let $r \in [1, \infty)$. Using Jensen's inequality	
\begin{align*}
	\left(\frac{1}{d}\sum_{k=1}^d\lambda_k\right)^r\leq \frac{1}{d}\sum_{k=1}^d\lambda_k^r.
\end{align*}
Since $S_{\tau}$ is diagonalizable we get 
\begin{align*}
\left(\frac{\mu(\Omega)}{d}\right)^r=\left(\frac{1}{d}\int_\Omega \| \tau_\alpha\|^2\, d \mu(\alpha)\right)^r=	\left(\frac{1}{d}\operatorname{Tra}(S_\tau)\right)^r\leq \frac{1}{d}\operatorname{Tra}(S_\tau)^r=\frac{1}{d}\operatorname{Tra}(\theta_\tau^*\theta_\tau)^r.
\end{align*}
Similarly the case $ r \in (0,1)  $ follows by using Jensen's inequality.
\end{proof}
\begin{theorem}\label{PWELCH}
Let $2<p<\infty$. Let $(\Omega, \mu)$ be a  measure space and $\{\tau_\alpha\}_{\alpha\in \Omega}$ be a 	normalized continuous Bessel  family for $\mathcal{H}$ of dimension $d$. If the diagonal $\Delta\coloneqq \{(\alpha, \alpha):\alpha \in \Omega\}$ is measurable in the measure space $\Omega\times \Omega$, then 	
\begin{align*}
\int_{\Omega\times\Omega}|\langle \tau_\alpha, \tau_\beta\rangle|^{p}\, d(\mu\times\mu)(\alpha,\beta)&=	\int_{\Omega}\int_{\Omega}|\langle \tau_\alpha, \tau_\beta\rangle|^{p}\, d \mu(\alpha)\, d \mu(\beta)\\
&\geq  	\frac{1}{(\mu\times \mu)((\Omega\times\Omega)\setminus\Delta)^{\frac{p}{2}-1}}	\left(\frac{\mu(\Omega)^2}{d}-(\mu\times\mu)(\Delta)\right)^\frac{p}{2}+(\mu\times\mu)(\Delta).
\end{align*}
\end{theorem}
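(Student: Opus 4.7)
The plan is to reduce the result to the first-order continuous Welch bound (Theorem \ref{FIRSTORDERCONTINUOUS}) by means of Jensen's inequality applied to the convex function $t \mapsto t^{p/2}$, using that $p/2 > 1$. First I would use the measurability of $\Delta$ together with the fact that $\mu(\Omega) < \infty$ (Lemma \ref{LEMMAFINITE}), so that $(\mu\times\mu)(\Omega\times\Omega) = \mu(\Omega)^2 < \infty$ and Fubini applies, to split
\begin{align*}
\int_{\Omega\times\Omega}|\langle \tau_\alpha, \tau_\beta\rangle|^{p}\, d(\mu\times\mu)(\alpha,\beta) = \int_{\Delta}|\langle \tau_\alpha, \tau_\alpha\rangle|^{p}\, d(\mu\times\mu) + \int_{(\Omega\times\Omega)\setminus\Delta}|\langle \tau_\alpha, \tau_\beta\rangle|^{p}\, d(\mu\times\mu).
\end{align*}
Since the family is normalized, $|\langle \tau_\alpha,\tau_\alpha\rangle|^{p} = 1$ on $\Delta$, so the diagonal piece contributes exactly $(\mu\times\mu)(\Delta)$, and it remains to bound the off-diagonal piece from below.

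For the off-diagonal piece, I would let $E \coloneqq (\Omega\times\Omega)\setminus\Delta$, assume $(\mu\times\mu)(E) > 0$ (otherwise the inequality is trivial, with the convention $0 \cdot \infty = 0$), and consider the probability measure $\nu \coloneqq \frac{1}{(\mu\times\mu)(E)}(\mu\times\mu)|_E$. Writing $|\langle \tau_\alpha, \tau_\beta\rangle|^p = \bigl(|\langle \tau_\alpha, \tau_\beta\rangle|^2\bigr)^{p/2}$ and applying Jensen's inequality to the convex function $t \mapsto t^{p/2}$ with respect to $\nu$ gives
\begin{align*}
\frac{1}{(\mu\times\mu)(E)}\int_{E}|\langle \tau_\alpha, \tau_\beta\rangle|^{p}\, d(\mu\times\mu) \geq \left(\frac{1}{(\mu\times\mu)(E)}\int_{E}|\langle \tau_\alpha, \tau_\beta\rangle|^{2}\, d(\mu\times\mu)\right)^{p/2}.
\end{align*}

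Next I would invoke Theorem \ref{FIRSTORDERCONTINUOUS}, which after subtracting the diagonal piece as above yields
\begin{align*}
\int_{E}|\langle \tau_\alpha, \tau_\beta\rangle|^{2}\, d(\mu\times\mu) \geq \frac{\mu(\Omega)^2}{d} - (\mu\times\mu)(\Delta) \geq 0.
\end{align*}
Since both sides are nonnegative and the exponent $p/2$ is positive, this inequality is preserved under raising to the $p/2$-th power. Substituting back gives
\begin{align*}
\int_{E}|\langle \tau_\alpha, \tau_\beta\rangle|^{p}\, d(\mu\times\mu) \geq \frac{1}{(\mu\times\mu)(E)^{p/2-1}}\left(\frac{\mu(\Omega)^2}{d} - (\mu\times\mu)(\Delta)\right)^{p/2},
\end{align*}
and adding back the diagonal contribution $(\mu\times\mu)(\Delta)$ produces the claimed inequality. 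There is no real obstacle here: the only subtlety is confirming the nonnegativity of $\frac{\mu(\Omega)^2}{d} - (\mu\times\mu)(\Delta)$ before taking the fractional power, which is precisely what the first-order continuous Welch bound supplies.
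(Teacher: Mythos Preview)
Your proof is correct and follows essentially the same route as the paper: split off the diagonal, invoke Theorem~\ref{FIRSTORDERCONTINUOUS} to control the off-diagonal $L^2$ mass, and then pass from $L^2$ to $L^p$. The only cosmetic difference is that the paper phrases the last step as H\"older's inequality on $(\Omega\times\Omega)\setminus\Delta$ with exponents $p/2$ and its conjugate, whereas you use Jensen's inequality for $t\mapsto t^{p/2}$ against the normalized measure---these are the same estimate.
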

\begin{proof}
Define $r\coloneqq 2p/(p-2)$ and $q$ be the conjugate index of $p/2$. Then $q=r/2$. Using Theorem \ref{FIRSTORDERCONTINUOUS} and Holder's inequality, we  have 
	\begin{align*}
\frac{\mu(\Omega)^2}{d}-(\mu\times\mu)(\Delta)&\leq	\int_{(\Omega\times\Omega)\setminus\Delta}|\langle \tau_\alpha, \tau_\beta\rangle|^2\, d(\mu\times\mu)(\alpha,\beta)\\
&\leq \left(\int_{(\Omega\times\Omega)\setminus\Delta}||\langle \tau_\alpha, \tau_\beta\rangle|^2|^\frac{p}{2}\, d(\mu\times\mu)(\alpha,\beta)\right)^\frac{2}{p}\left(\int_{(\Omega\times\Omega)\setminus\Delta}d(\mu\times\mu)(\alpha,\beta)\right)^\frac{1}{q}\\
&=\left(\int_{(\Omega\times\Omega)\setminus\Delta}|\langle \tau_\alpha, \tau_\beta\rangle|^p\, d(\mu\times\mu)(\alpha,\beta)\right)^\frac{2}{p}(\mu\times \mu)((\Omega\times\Omega)\setminus\Delta)^\frac{1}{q}\\	
&=\left(\int_{(\Omega\times\Omega)\setminus\Delta}|\langle \tau_\alpha, \tau_\beta\rangle|^p\, d(\mu\times\mu)(\alpha,\beta)\right)^\frac{2}{p}(\mu\times \mu)((\Omega\times\Omega)\setminus\Delta)^\frac{2}{r}\\	
&=\left(\int_{(\Omega\times\Omega)\setminus\Delta}|\langle \tau_\alpha, \tau_\beta\rangle|^p\, d(\mu\times\mu)(\alpha,\beta)\right)^\frac{2}{p}(\mu\times \mu)((\Omega\times\Omega)\setminus\Delta)^\frac{p-2}{p}	
\end{align*}
which gives 
\begin{align*}
	\left(\frac{\mu(\Omega)^2}{d}-(\mu\times\mu)(\Delta)\right)^\frac{p}{2}\leq \left(\int_{(\Omega\times\Omega)\setminus\Delta}|\langle \tau_\alpha, \tau_\beta\rangle|^p\, d(\mu\times\mu)(\alpha,\beta)\right)(\mu\times \mu)((\Omega\times\Omega)\setminus\Delta)^{\frac{p}{2}-1}.
\end{align*}
Therefore 
\begin{align*}
&\frac{1}{(\mu\times \mu)((\Omega\times\Omega)\setminus\Delta)^{\frac{p}{2}-1}}	\left(\frac{\mu(\Omega)^2}{d}-(\mu\times\mu)(\Delta)\right)^\frac{p}{2}+(\mu\times\mu)(\Delta)\\
	&~=\frac{1}{(\mu\times \mu)((\Omega\times\Omega)\setminus\Delta)^{\frac{p}{2}-1}}	\left(\frac{\mu(\Omega)^2}{d}-(\mu\times\mu)(\Delta)\right)^\frac{p}{2}+\int_{\Delta}|\langle \tau_\alpha, \tau_\alpha\rangle|^p\, d(\mu\times\mu)(\alpha,\beta)\\
	&~\leq \int_{(\Omega\times\Omega)\setminus\Delta}|\langle \tau_\alpha, \tau_\beta\rangle|^p\, d(\mu\times\mu)(\alpha,\beta)+\int_{\Delta}|\langle \tau_\alpha, \tau_\alpha\rangle|^p\, d(\mu\times\mu)(\alpha,\beta)\\
	&~=\int_{\Omega\times\Omega}|\langle \tau_\alpha, \tau_\beta\rangle|^{p}\, d(\mu\times\mu)(\alpha,\beta).
\end{align*}
\end{proof}
	
	
	
There are four more bounds which are in line with Welch bounds. To state them we need a definition.
\begin{definition}\cite{JASPERKINGMIXON}
Given $d\in \mathbb{N}$, define \textbf{Gerzon's bound}
\begin{align*}
	\mathcal{Z}(d, \mathbb{K})\coloneqq 
	\left\{ \begin{array}{cc} 
		d^2 & \quad \text{if} \quad \mathbb{K} =\mathbb{C}\\
	\frac{d(d+1)}{2} & \quad \text{if} \quad \mathbb{K} =\mathbb{R}.\\
	\end{array} \right.
\end{align*}	
\end{definition}
\begin{theorem}\cite{JASPERKINGMIXON, XIACORRECTION, MUKKAVILLISABHAWALERKIPAAZHANG, SOLTANALIAN, BUKHCOX, CONWAYHARDINSLOANE, HAASHAMMENMIXON, RANKIN}  \label{LEVENSTEINBOUND}
Define $m\coloneqq \operatorname{dim}_{\mathbb{R}}(\mathbb{K})/2$.	If	$\{\tau_j\}_{j=1}^n$  is any collection of  unit vectors in $\mathbb{K}^d$, then
\begin{enumerate}[\upshape(i)]
	\item (\textbf{Bukh-Cox bound})
	\begin{align*}
		\max _{1\leq j,k \leq n, j\neq k}|\langle \tau_j, \tau_k\rangle |\geq \frac{\mathcal{Z}(n-d, \mathbb{K})}{n(1+m(n-d-1)\sqrt{m^{-1}+n-d})-\mathcal{Z}(n-d, \mathbb{K})}\quad \text{if} \quad n>d.
	\end{align*}
	\item (\textbf{Orthoplex/Rankin bound})	
	\begin{align*}
		\max _{1\leq j,k \leq n, j\neq k}|\langle \tau_j, \tau_k\rangle |\geq\frac{1}{\sqrt{d}} \quad \text{if} \quad n>\mathcal{Z}(d, \mathbb{K}).
	\end{align*}
	\item (\textbf{Levenstein bound})	
	\begin{align*}
		\max _{1\leq j,k \leq n, j\neq k}|\langle \tau_j, \tau_k\rangle |\geq \sqrt{\frac{n(m+1)-d(md+1)}{(n-d)(md+1)}} \quad \text{if} \quad n>\mathcal{Z}(d, \mathbb{K}).
	\end{align*}
\item (\textbf{Exponential bound})
	\begin{align*}
	\max _{1\leq j,k \leq n, j\neq k}|\langle \tau_j, \tau_k\rangle |\geq 1-2n^{\frac{-1}{d-1}}.
\end{align*}
\end{enumerate}	
\end{theorem}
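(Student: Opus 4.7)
The plan is to treat the four inequalities separately, since they are distinct classical results attributed to different sources and proved by different techniques; the first three nevertheless share the underlying device of lifting each $\tau_{j}$ to the rank-one projection $\tau_{j}\tau_{j}^{*}$, viewed as a vector in the $\mathcal{Z}(d,\mathbb{K})$-dimensional real inner product space of self-adjoint operators on $\mathbb{K}^{d}$ under the Hilbert--Schmidt pairing (so that $\langle \tau_{j}\tau_{j}^{*},\tau_{k}\tau_{k}^{*}\rangle_{\mathrm{HS}}=|\langle \tau_{j},\tau_{k}\rangle|^{2}$). The fourth bound is instead a volume argument.

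The orthoplex/Rankin bound (ii) is the most transparent. If every off-diagonal squared inner product were strictly less than $1/d$, then the centered projections $\tau_{j}\tau_{j}^{*}-\tfrac{1}{d}I$ would form a nonzero traceless family in a real inner product space of dimension $\mathcal{Z}(d,\mathbb{K})-1$ whose pairwise Hilbert--Schmidt inner products $|\langle \tau_{j},\tau_{k}\rangle|^{2}-\tfrac{1}{d}$ are strictly negative. The standard lemma that vectors in an $N$-dimensional real inner product space with pairwise-negative inner products number at most $N+1$ then forces $n\leq\mathcal{Z}(d,\mathbb{K})$, contradicting the hypothesis $n>\mathcal{Z}(d,\mathbb{K})$.

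The Bukh--Cox bound (i) I would derive from the Naimark complement. A unit-norm tight frame $\{\tau_{j}\}_{j=1}^{n}$ in $\mathbb{K}^{d}$ gives rise, after rescaling, to a unit-norm tight frame $\{\sigma_{j}\}_{j=1}^{n}$ in $\mathbb{K}^{n-d}$ whose off-diagonal Gram entries differ from those of $\{\tau_{j}\}$ only by a scalar factor that is a rational function of $n$ and $d$. Applying a refined orthoplex-type inequality (the sharp form due to Bukh and Cox) to $\{\sigma_{j}\}$ in dimension $n-d$ and pulling back produces the stated rational expression in $n$, $d$ and $m$. The main obstacle here will be the algebraic bookkeeping and the extension of the argument from the tight case to general unit vectors, as performed in the original Bukh--Cox paper.

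For the Levenstein bound (iii) I would invoke the Delsarte--Goethals--Seidel linear programming method on the unit sphere of $\mathbb{K}^{d}$: select a polynomial $f$ with $f(1)>0$ and $f(t)\leq 0$ on $[-\rho,\rho]$, where $\rho$ denotes the maximum off-diagonal absolute inner product, whose expansion in the relevant zonal Jacobi polynomials (with parameters depending on $m=\dim_{\mathbb{R}}\mathbb{K}/2$) has non-negative coefficients; positive-definiteness of the zonal kernels then yields a quantitative lower bound on $\rho$ in terms of $n$ and $d$. The polynomial-selection step is the genuinely hard part and is the content of Levenstein's original work. Finally, the exponential bound (iv) is a standard volume-of-spherical-caps packing argument: spherical caps of the appropriate angular radius about $\pm\tau_{j}$ must be pairwise disjoint in the ambient sphere, and comparing their total volume with the volume of the sphere yields the claimed exponential bound after a short rearrangement.
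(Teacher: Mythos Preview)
The paper does not prove this theorem at all: Theorem~\ref{LEVENSTEINBOUND} is stated with citations to the original sources and left without proof, serving only to motivate the subsequent open question about continuous analogues. There is therefore no ``paper's own proof'' against which your sketch can be compared.

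That said, your outlines are broadly faithful to the arguments in the cited literature. The orthoplex argument via centered rank-one projections and the pairwise-negative-inner-product lemma is exactly the standard one. For Bukh--Cox you correctly identify the Naimark-complement mechanism, though you also correctly flag that the reduction from arbitrary unit vectors to the tight case and the precise algebraic manipulation are the substantive content of the original paper rather than routine bookkeeping. For the Levenstein bound the Delsarte--Goethals--Seidel linear-programming framework is indeed the right vehicle, and the exponential bound is a cap-packing estimate as you say. None of this is needed for the present paper, however, since the theorem is quoted rather than proved.
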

Theorem \ref{LEVENSTEINBOUND}   leads to the following problem.
\begin{question}
	Whether there is a  continuous version of Theorem \ref{LEVENSTEINBOUND}?. In particular, does there exists a continuous version of 
\begin{enumerate}[\upshape(i)]
	\item Bukh-Cox bound?
	\item Orthoplex/Rankin bound?
	\item Levenstein bound?
	\item Exponential bound?
\end{enumerate}		
\end{question}

\section{Applications}
Our first application of Theorem  \ref{CONTINUOUSWELCHMAIN}  is to the continuous version of RMS correlation of vectors which we define as follows.
\begin{definition}\label{CRMSDEFINITION}
Let  $\{\tau_\alpha\}_{\alpha\in \Omega}$ be a 	normalized continuous Bessel  family for  $\mathcal{H}$. If the diagonal $\Delta$ is measurable, then the \textbf{continuous root-mean-square} (CRMS) absolute cross relation of $\{\tau_\alpha\}_{\alpha\in \Omega}$  is defined as 	
\begin{align*}
I_{\text{CRMS}}(\{\tau_\alpha\}_{\alpha\in \Omega})\coloneqq	\left(\frac{1}{(\mu\times\mu)((\Omega\times\Omega)\setminus\Delta)}\int_{(\Omega\times\Omega)\setminus\Delta}|\langle \tau_\alpha, \tau_\beta\rangle|^2\, d(\mu\times\mu)(\alpha,\beta)\right)^\frac{1}{2}.
\end{align*}
\end{definition}
Theorem 	\ref{CONTINUOUSWELCHMAIN} now gives the following estimate.
\begin{proposition}
	Under the set up as in Definition \ref{CRMSDEFINITION}, one has 
	\begin{align*}
1\geq I_{\text{CRMS}}(\{\tau_\alpha\}_{\alpha\in \Omega})		\geq \left(\frac{1}{(\mu\times\mu)((\Omega\times\Omega)\setminus\Delta)}\left[\frac{\mu(\Omega)^2}{d}-(\mu\times\mu)(\Delta)\right]\right)^\frac{1}{2}.
	\end{align*}
\end{proposition}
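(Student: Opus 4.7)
The plan is to handle the two inequalities separately, and both are short consequences of material already established. The upper bound $I_{\text{CRMS}} \leq 1$ is immediate from Cauchy-Schwarz: since the family is normalized, $|\langle \tau_\alpha, \tau_\beta\rangle|^2 \leq \|\tau_\alpha\|^2 \|\tau_\beta\|^2 = 1$ pointwise on $(\Omega\times\Omega)\setminus\Delta$, so integrating and dividing by the measure of the off-diagonal piece yields the bound.

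For the lower bound I would invoke Theorem \ref{FIRSTORDERCONTINUOUS} (equivalently, the $m=1$ case of Theorem \ref{CONTINUOUSWELCHMAIN}), which gives
\begin{align*}
\int_{\Omega\times\Omega}|\langle \tau_\alpha, \tau_\beta\rangle|^2\, d(\mu\times\mu)(\alpha,\beta)\geq \frac{\mu(\Omega)^2}{d}.
\end{align*}
Next I would split the integral over $\Delta$ and its complement. On the diagonal, $|\langle \tau_\alpha,\tau_\alpha\rangle|^2 = \|\tau_\alpha\|^4 = 1$ by normalization, so the diagonal contribution is exactly $(\mu\times\mu)(\Delta)$. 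Rearranging,
\begin{align*}
\int_{(\Omega\times\Omega)\setminus\Delta}|\langle \tau_\alpha, \tau_\beta\rangle|^2\, d(\mu\times\mu)(\alpha,\beta)\geq \frac{\mu(\Omega)^2}{d}-(\mu\times\mu)(\Delta).
\end{align*}
Dividing by $(\mu\times\mu)((\Omega\times\Omega)\setminus\Delta)$ and taking square roots produces the claimed lower estimate.

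There is no real obstacle here; the only mild subtlety is to make sure the right-hand side of the lower bound is nonnegative so that taking a square root makes sense, but this is automatic because the off-diagonal integral is nonnegative, forcing $\mu(\Omega)^2/d - (\mu\times\mu)(\Delta) \geq 0$ whenever the Welch-type inequality is used. I would also briefly remark on the degenerate case when $(\mu\times\mu)((\Omega\times\Omega)\setminus\Delta) = 0$, in which the definition of $I_{\text{CRMS}}$ is vacuous, so the inequality is understood on the nontrivial range.
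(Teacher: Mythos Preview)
Your proof is correct and follows the same approach the paper intends: the paper simply states that the proposition follows from Theorem~\ref{CONTINUOUSWELCHMAIN}, and the splitting of the double integral into diagonal and off-diagonal parts that you spell out is exactly the computation carried out in the proof of Theorem~\ref{FIRSTORDERCONTINUOUS}. Your additional remarks on the nonnegativity of the radicand and the degenerate case are helpful clarifications beyond what the paper records.
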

Our second application  of Theorem  \ref{CONTINUOUSWELCHMAIN} is to the continuous version of frame potential which is  defined as follows.
\begin{definition}\label{CONTINUOUSPOTENTIALDEFINITION}
 Let  $\{\tau_\alpha\}_{\alpha\in \Omega}$ be a 	normalized continuous Bessel  family for  $\mathcal{H}$. The \textbf{continuous frame potential} of  $\{\tau_\alpha\}_{\alpha\in \Omega}$ is defined as 
 \begin{align*}
 	FP(\{\tau_\alpha\}_{\alpha\in \Omega})\coloneqq \int_{\Omega}\int_{\Omega}|\langle \tau_\alpha, \tau_\beta\rangle|^{2}\, d \mu(\alpha)\, d \mu(\beta).
 \end{align*}	
\end{definition}
Note that  the order of integration does not matter in Definition \ref{CONTINUOUSPOTENTIALDEFINITION}. Further, finiteness of measure says that potential is finite. In general, it is difficult to find potential using Definition \ref{CONTINUOUSPOTENTIALDEFINITION}. Following theorem simplifies it to a greater extent.
\begin{theorem}
 If  $\{\tau_\alpha\}_{\alpha\in \Omega}$ is a 	normalized continuous Bessel  family for  $\mathcal{H}$, then 
 \begin{align*}
 	FP(\{\tau_\alpha\}_{\alpha\in \Omega})=	\text{Tra}(S_\tau^2)=	\text{Tra}((\theta_\tau^*\theta_\tau)^2).
 \end{align*}	
\end{theorem}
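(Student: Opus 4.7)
The plan is to recognize that this theorem is essentially a direct consequence of results already established in the paper, so the ``proof'' is really a matter of assembly rather than any new computation.

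First I would observe that Definition \ref{CONTINUOUSPOTENTIALDEFINITION} sets
\[
FP(\{\tau_\alpha\}_{\alpha\in \Omega}) = \int_{\Omega}\int_{\Omega}|\langle \tau_\alpha, \tau_\beta\rangle|^{2}\, d \mu(\alpha)\, d \mu(\beta),
\]
which is precisely the right-hand side of the second identity in Theorem \ref{TRACEFORMULA}. Since $\{\tau_\alpha\}_{\alpha\in \Omega}$ is a normalized continuous Bessel family, Theorem \ref{TRACEFORMULA} applies and immediately yields
\[
FP(\{\tau_\alpha\}_{\alpha\in \Omega}) = \operatorname{Tra}(S_\tau^2).
\]
Implicit in this step is that the frame operator $S_\tau$ is well defined, which is guaranteed by the Bessel hypothesis (and by Lemma \ref{LEMMAFINITE} the underlying measure is finite, so the integrals make sense).

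For the second equality I would simply unpack the definition of $S_\tau$ given just before Theorem \ref{TRACEFORMULA}, namely $S_\tau \coloneqq \theta_\tau^*\theta_\tau$. Squaring gives $S_\tau^2 = (\theta_\tau^*\theta_\tau)^2$, and applying the trace to both sides produces
\[
\operatorname{Tra}(S_\tau^2) = \operatorname{Tra}((\theta_\tau^*\theta_\tau)^2),
\]
completing the chain of equalities.

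There is no real obstacle here; the only thing to be careful about is making sure the hypotheses of Theorem \ref{TRACEFORMULA} are met (normalized continuous Bessel family, finite-dimensional $\mathcal{H}$), which they are by assumption. The entire content of the proof is therefore a citation of Theorem \ref{TRACEFORMULA} together with the definitional identity $S_\tau = \theta_\tau^*\theta_\tau$.
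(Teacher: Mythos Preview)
Your proposal is correct and matches the paper's own proof, which is simply the one-line statement ``This follows from Theorem \ref{TRACEFORMULA}.'' You have merely spelled out the two ingredients (the second trace identity of Theorem \ref{TRACEFORMULA} and the definitional equality $S_\tau=\theta_\tau^*\theta_\tau$) that the paper leaves implicit.
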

\begin{proof}
	This follows from Theorem \ref{TRACEFORMULA}.
\end{proof}
Using Theorem 	\ref{CONTINUOUSWELCHMAIN} we have following estimates.
\begin{proposition}\label{FPESTIMATE}
 Given a normalized continuous Bessel  family	$\{\tau_\alpha\}_{\alpha\in \Omega}$ for $\mathcal{H}$, one has 
\begin{align*}
	\frac{\mu(\Omega)^2}{d}\leq FP(\{\tau_\alpha\}_{\alpha\in \Omega})\leq \mu(\Omega)^2.
\end{align*}	
Further, if the diagonal $\Delta$ is measurable, then one also has 
\begin{align*}
(\mu\times\mu)(\Delta)\leq FP(\{\tau_\alpha\}_{\alpha\in \Omega})\leq \mu(\Omega)^2.
\end{align*}
\end{proposition}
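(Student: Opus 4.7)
The plan is to treat this as four short inequalities, each reducing to a one-line observation after the right decomposition. The machinery has all been set up: Theorem~\ref{FIRSTORDERCONTINUOUS}, Lemma~\ref{LEMMAFINITE}, and Definition~\ref{CONTINUOUSPOTENTIALDEFINITION} do all the heavy lifting.

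For the first chain, the lower bound $\mu(\Omega)^2/d \leq FP(\{\tau_\alpha\}_{\alpha\in\Omega})$ is nothing but Inequality~(\ref{1}) of Theorem~\ref{FIRSTORDERCONTINUOUS} rewritten using the definition of $FP$, so I would simply cite that theorem. For the upper bound $FP(\{\tau_\alpha\}_{\alpha\in\Omega}) \leq \mu(\Omega)^2$, I would apply Cauchy--Schwarz pointwise, $|\langle \tau_\alpha,\tau_\beta\rangle|^2 \leq \|\tau_\alpha\|^2\|\tau_\beta\|^2 = 1$, using normalization, and then integrate; finiteness of $\mu(\Omega)$ (Lemma~\ref{LEMMAFINITE}) guarantees the bound $\int_\Omega\int_\Omega 1\, d\mu\, d\mu = \mu(\Omega)^2$ is finite and serves as the envelope.

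For the second chain, under measurability of $\Delta$, I would split the double integral via Fubini (justified by $\sigma$-finiteness, which follows from Lemma~\ref{LEMMAFINITE}):
\begin{align*}
FP(\{\tau_\alpha\}_{\alpha\in\Omega}) = \int_{\Delta}|\langle \tau_\alpha,\tau_\alpha\rangle|^2\, d(\mu\times\mu) + \int_{(\Omega\times\Omega)\setminus\Delta}|\langle \tau_\alpha,\tau_\beta\rangle|^2\, d(\mu\times\mu).
\end{align*}
On $\Delta$, $|\langle \tau_\alpha,\tau_\alpha\rangle|^2 = \|\tau_\alpha\|^4 = 1$, so the first summand equals $(\mu\times\mu)(\Delta)$; the second summand is nonnegative, delivering $(\mu\times\mu)(\Delta) \leq FP(\{\tau_\alpha\}_{\alpha\in\Omega})$. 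The upper bound is the same as in the first chain.

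There is essentially no obstacle here; the only thing to verify is the applicability of Fubini's theorem in the splitting step, which is immediate from Lemma~\ref{LEMMAFINITE} (the measure is finite, hence $\sigma$-finite). The proposition is therefore best viewed as a convenient packaging of Theorem~\ref{FIRSTORDERCONTINUOUS} together with the trivial pointwise estimate $|\langle\tau_\alpha,\tau_\beta\rangle|\leq 1$.
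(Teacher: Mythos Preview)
Your proposal is correct and follows essentially the same approach as the paper, which simply invokes Theorem~\ref{CONTINUOUSWELCHMAIN} (your citation of Theorem~\ref{FIRSTORDERCONTINUOUS} is the $m=1$ case) for the lower bound and leaves the rest implicit. Your write-up is in fact more detailed than the paper's, which offers no proof beyond that single reference; the only cosmetic point is that the hypothesis ``$\Delta$ measurable'' appears in the statement of Theorem~\ref{FIRSTORDERCONTINUOUS} but, as the paper itself remarks after Theorem~\ref{CONTINUOUSWELCHMAIN}, is not actually used in deriving Inequality~(\ref{1}), so your citation for the first chain is legitimate.
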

	Proposition \ref{FPESTIMATE} and the study of paper \cite{BENEDETTOFICKUS} leads to the following problem.
\begin{question}
\textbf{Is  there  a characterization of continuous frames using continuous frame potential (like Theorem 7.1 in \cite{BENEDETTOFICKUS})?}
\end{question}
Our third application of Theorem  \ref{CONTINUOUSWELCHMAIN} is to the continuous frame correlations defined as follows.
\begin{definition}
Let  $\{\tau_\alpha\}_{\alpha\in \Omega}$ be a 	normalized continuous frame for $\mathcal{H}$.	We define the  \textbf{continuous frame correlation} of $\{\tau_\alpha\}_{\alpha\in \Omega}$ as 
\begin{align*}
	\mathcal{M}(\{\tau_\alpha\}_{\alpha\in \Omega})\coloneqq \sup_{\alpha, \beta \in \Omega, \alpha\neq \beta}|\langle\tau_\alpha, \tau_\beta \rangle|.
\end{align*}
\end{definition}
In discrete frame theory the notion which comes along with frame correlation is the notion of Grassmannian frames defined in \cite{STROHMERHEATH}. We next set up the notion of continuous Grassmannian frames. 
\begin{definition}
 A normalized continuous frame $\{\tau_\alpha\}_{\alpha\in \Omega}$ for   $\mathcal{H}$ is said to be a \textbf{continuous Grassmannian frame} for  $\mathcal{H}$ if 
\begin{align*}
	\mathcal{M}(\{\tau_\alpha\}_{\alpha\in \Omega})=\inf\left\{\mathcal{M}(\{\omega_\alpha\}_{\alpha\in \Omega}):\{\omega_\alpha\}_{\alpha\in \Omega}\text{ is a normalized continuous frame for }\mathcal{H} \right\}.	
\end{align*}		
\end{definition}
Using compactness and continuity arguments it is known that Grassmannian frames exist in every dimension with any number of vectors (greater than or equal to dimension) \cite{BENEDETTONKOLESAR}. However we can not use this argument for measures. Therefore we state the following open problem.
\begin{question}
	\textbf{Classify measure spaces and (finite dimensional) Hilbert spaces so that continuous Grassmannian frames exist}.
\end{question}
The notion which is associated to Grassmannian frames is the  notion of equiangular frames (see \cite{STROHMERHEATH}). For the continuous case, we set the definition as follows.
\begin{definition}
 A continuous frame $\{\tau_\alpha\}_{\alpha\in \Omega}$ for  $\mathcal{H}$ is said to be 	\textbf{$\gamma$-equiangular} if there exists $\gamma\geq0$ such that
\begin{align*}
|\langle\tau_\alpha, \tau_\beta \rangle|=\gamma, \quad \forall 	\alpha, \beta \in \Omega, \alpha\neq \beta.
\end{align*}
\end{definition}
There is a celebrated Zauner's conjecture for equiangular tight frames (see \cite{APPLEBY}). For the purpose of record, we set the continuous version of Zauner's conjecture as follows.
\begin{conjecture}
	(\textbf{Continuous Zauner's conjecture}) \textbf{For a given measure space  $(\Omega, \mu)$ and for  every $d\in \mathbb{N}$, there exists a $\gamma$-equiangular tight continuous frame $\{\tau_\alpha\}_{\alpha\in \Omega}$  for $\mathbb{C}^d$ such that $\mu(\Omega)=d^2$}.
\end{conjecture}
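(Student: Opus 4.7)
As the naming suggests, this is a genuine open conjecture and not a theorem with a known proof: the discrete case is precisely the original Zauner conjecture on SIC-POVMs, open since 1999 and verified only in finitely many dimensions. I therefore do not aim for an unconditional argument, but rather for a conditional reduction to the discrete conjecture, producing the required continuous frame as a ``blow-up'' of a SIC.

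The plan proceeds in two steps. First, observe that taking $\Omega = \{1,\dots,d^2\}$ with counting measure $\mu$ gives $\mu(\Omega) = d^2$, and a $\gamma$-equiangular tight continuous frame on this space is exactly a SIC-POVM in $\mathbb{C}^d$; so the continuous conjecture contains the classical Zauner conjecture as a special case and cannot be easier. Second, in the converse direction, assume Zauner's conjecture holds in dimension $d$ and fix a SIC $\{v_1,\dots,v_{d^2}\}$ in $\mathbb{C}^d$, so that $|\langle v_i,v_j\rangle|^2 = 1/(d+1)$ for $i \neq j$ and $\sum_{i=1}^{d^2} v_i v_i^* = d\, I_{\mathbb{C}^d}$. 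Given a measure space $(\Omega,\mu)$ with $\mu(\Omega) = d^2$ that admits a measurable partition $\Omega = \bigsqcup_{i=1}^{d^2}\Omega_i$ with $\mu(\Omega_i) = 1$ for each $i$, set $\tau_\alpha \coloneqq v_i$ whenever $\alpha \in \Omega_i$. A direct computation using Theorem \ref{TRACEFORMULA} shows that the frame operator equals $\sum_i \mu(\Omega_i)\, v_i v_i^* = d\, I_{\mathbb{C}^d}$ and that $|\langle \tau_\alpha,\tau_\beta\rangle| = 1/\sqrt{d+1}$ for $(\mu\times\mu)$-almost every $(\alpha,\beta) \in (\Omega\times\Omega)\setminus\Delta$, yielding the desired equiangular tight continuous frame up to a null set.

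Two genuine obstacles remain, and the second is where I expect the main difficulty to lie. The first is conditional: since discrete Zauner is unresolved, the construction above produces continuous Zauner only modulo that hypothesis. The second is measure-theoretic and, I suspect, forces a refinement of the conjecture itself. A partition of $\Omega$ into $d^2$ measurable pieces of mass one need not exist for arbitrary $(\Omega,\mu)$; for example, a space carrying a single atom of mass $d^2$ admits no family of $d^2$ pairwise distinct equiangular unit vectors when $d \geq 2$, so the conjecture as literally stated fails there. The natural remedy is to restrict to, say, standard measure spaces all of whose atoms have mass at most $1$, in which case Sierpi\'nski's theorem on the range of non-atomic measures supplies the required partition and the blow-up argument goes through conditionally on Zauner.
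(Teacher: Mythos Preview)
The paper offers no proof of this statement; it is labeled a conjecture and left entirely open, with no discussion of its relationship to the discrete Zauner conjecture or of any obstructions. Your recognition that an unconditional proof is not available is therefore exactly in line with the paper's own stance.

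Your analysis in fact goes substantially further than the paper. The two-way reduction you sketch---that the discrete SIC problem is the special case $\Omega=\{1,\dots,d^2\}$ with counting measure, and that conversely a SIC can be ``spread'' over a suitable measurable partition of a general $(\Omega,\mu)$ with $\mu(\Omega)=d^2$---is correct and is not discussed in the paper at all. More importantly, your second obstacle pinpoints a genuine defect in the conjecture as the paper states it: if $(\Omega,\mu)$ consists of a single atom of mass $d^2$, then any family $\{\tau_\alpha\}_{\alpha\in\Omega}$ is a single vector, which cannot be a frame for $\mathbb{C}^d$ once $d\geq 2$. Hence the conjecture is literally false for arbitrary measure spaces, and some hypothesis of the kind you propose (non-atomicity, or atoms of mass at most one, together with an intermediate-value argument for the partition) is required for the statement even to be plausible. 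The paper does not address this, so your proposal is not merely a different route to the same end but a correction to the formulation itself.
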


\begin{theorem}\label{CNG2}
Let  $\{\tau_\alpha\}_{\alpha\in \Omega}$ be a 	normalized continuous frame for  $\mathcal{H}$. Then 
\begin{align}\label{EQUIANGULARINEQUALITY}
		\mathcal{M}(\{\tau_\alpha\}_{\alpha\in \Omega})\geq \left(\frac{1}{(\mu\times\mu)((\Omega\times\Omega)\setminus\Delta)}\left[\frac{\mu(\Omega)^2}{d}-(\mu\times\mu)(\Delta)\right]\right)^\frac{1}{2}\eqqcolon\gamma.
\end{align}
If the frame is $\gamma$-equiangular, then we have equality in Inequality (\ref{EQUIANGULARINEQUALITY}).
\end{theorem}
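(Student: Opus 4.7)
The plan is to recognize Theorem \ref{CNG2} as an almost immediate consequence of the first order continuous Welch bound already established in Theorem \ref{FIRSTORDERCONTINUOUS}, combined with nothing more than unfolding the definitions of $\mathcal{M}$ and of $\gamma$-equiangularity. There is genuinely no new analytic content to produce; the work is organizational.

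First I would rewrite the continuous frame correlation in squared form, using that the square function is monotone on $[0,\infty)$:
$$\mathcal{M}(\{\tau_\alpha\}_{\alpha\in\Omega})^{2} \;=\; \sup_{\alpha,\beta\in\Omega,\,\alpha\neq\beta}|\langle \tau_\alpha,\tau_\beta\rangle|^{2}.$$
Since a normalized continuous frame is in particular a normalized continuous Bessel family, and the diagonal $\Delta$ is measurable by hypothesis, the first order continuous Welch bound of Theorem \ref{FIRSTORDERCONTINUOUS} applies and gives
$$\sup_{\alpha\neq\beta}|\langle \tau_\alpha,\tau_\beta\rangle|^{2} \;\geq\; \frac{1}{(\mu\times\mu)((\Omega\times\Omega)\setminus\Delta)}\left[\frac{\mu(\Omega)^{2}}{d}-(\mu\times\mu)(\Delta)\right] \;=\; \gamma^{2}.$$
Taking (positive) square roots on both sides produces Inequality (\ref{EQUIANGULARINEQUALITY}).

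For the equality statement, suppose $\{\tau_\alpha\}_{\alpha\in\Omega}$ is $\gamma$-equiangular with the specific $\gamma$ defined in the theorem. By the definition of $\gamma$-equiangularity, $|\langle\tau_\alpha,\tau_\beta\rangle|=\gamma$ for every pair $\alpha\neq\beta$, so the supremum defining $\mathcal{M}(\{\tau_\alpha\}_{\alpha\in\Omega})$ is attained (trivially) by the common value $\gamma$, and Inequality (\ref{EQUIANGULARINEQUALITY}) becomes the equality $\mathcal{M}(\{\tau_\alpha\}_{\alpha\in\Omega})=\gamma$.

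Since both parts reduce to invocations of Theorem \ref{FIRSTORDERCONTINUOUS} and direct definition-unwinding, I do not expect a real obstacle. The one fine point I would flag is to ensure the bracketed quantity $\mu(\Omega)^{2}/d-(\mu\times\mu)(\Delta)$ is non-negative so that $\gamma$ is well defined as a real number; this is already implicit in writing the square root in the statement, and in any non-degenerate situation it follows from the chain of inequalities in the proof of Theorem \ref{FIRSTORDERCONTINUOUS} together with the boundedness $|\langle\tau_\alpha,\tau_\beta\rangle|\leq 1$.
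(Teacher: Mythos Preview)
Your proposal is correct and matches the paper's approach exactly: the paper does not even write out a proof for Theorem~\ref{CNG2}, leaving it as an immediate consequence of Theorem~\ref{FIRSTORDERCONTINUOUS} together with the definitions of $\mathcal{M}$ and $\gamma$-equiangularity, which is precisely what you do. Your remark about the nonnegativity of $\mu(\Omega)^2/d-(\mu\times\mu)(\Delta)$ is a reasonable caveat that the paper leaves implicit.
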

In the case of (discrete) Grassmannian frames, the converse statement of  Theorem \ref{CNG2} is valid (see \cite{STROHMERHEATH}). There are also relations between number of elements in the frame and dimension of the space (Theorem 2.3 in \cite{STROHMERHEATH}). We do not  know any such \textbf{relation between measure of $\Omega$ and the dimension of $\mathcal{H}$}.

\section*{Acknowledgements}
I thank Dr. P. Sam Johnson, Department of Mathematical and Computational Sciences, National Institute of Technology Karnataka (NITK),  Surathkal for some discussions.


 \bibliographystyle{plain}
 \bibliography{reference.bib}

\end{document}